\theoremstyle{definition} 
	\newtheorem{definition}{Definition}[section]
	\newtheorem{example}[definition]{Example}
\theoremstyle{plain}
	\newtheorem{theorem}[definition]{Theorem}
	\newtheorem{proposition}[definition]{Proposition}
	\newtheorem{lemma}[definition]{Lemma}
\newcommand{\oneslant}[1]{\big[T_{n,#1}\big]\diagdown}
\newcommand{\leftaddoneboth}[1]{\mathchoice
        {\mathord{\raisebox{2pt}{$\diagdown$}\hspace{-2pt}\raisebox{-1pt}{$\llcorner$}}}
        {\mathord{\raisebox{2pt}{$\diagdown$}\hspace{-2pt}\raisebox{-1pt}{$\llcorner$}}}
        {\mathord{\raisebox{2pt}{$\scriptscriptstyle\diagdown$}\hspace{-1pt}\raisebox{-3pt}{$\scriptscriptstyle\llcorner$}}}
        {\mathord{\raisebox{2pt}{$\scriptscriptstyle\diagdown$}\hspace{-1pt}\raisebox{-3pt}{$\scriptscriptstyle\llcorner$}}}
    \hspace{-1pt}\big[T_{n,#1}\big] \hspace{-1pt}\mathchoice
        {\mathord{\raisebox{2pt}{$\diagdown$}\hspace{-5pt}\raisebox{-1pt}{$\lrcorner$}}}
        {\mathord{\raisebox{2pt}{$\diagdown$}\hspace{-5pt}\raisebox{-1pt}{$\lrcorner$}}}
        {\mathord{\raisebox{2pt}{$\scriptscriptstyle\diagdown$}\hspace{-5pt}\raisebox{-1pt}{$\scriptscriptstyle\lrcorner$}}}
        {\mathord{\raisebox{2pt}{$\scriptscriptstyle\diagdown$}\hspace{-5pt}\raisebox{-1pt}{$\scriptscriptstyle\lrcorner$}}}
    \hspace{1pt}
}
\newcommand{\bothaddone}[1]{\mathchoice
        {\mathord{\raisebox{2pt}{$\diagdown$}\hspace{-1pt}\raisebox{-1pt}{$\llcorner$}}}
        {\mathord{\raisebox{2pt}{$\diagdown$}\hspace{-1pt}\raisebox{-1pt}{$\llcorner$}}}
        {\mathord{\raisebox{2pt}{$\scriptscriptstyle\diagdown$}\hspace{-1pt}\raisebox{-1pt}{$\scriptscriptstyle\llcorner$}}}
        {\mathord{\raisebox{2pt}{$\scriptscriptstyle\diagdown$}\hspace{-1pt}\raisebox{-1pt}{$\scriptscriptstyle\llcorner$}}}
    \hspace{-1pt}\big[T_{n,#1}\big] \hspace{-1pt}\mathchoice
        {\mathord{\raisebox{2pt}{$\urcorner$}\hspace{-1pt}\raisebox{-1pt}{$\diagdown$}}}
        {\mathord{\raisebox{2pt}{$\urcorner$}\hspace{-1pt}\raisebox{-1pt}{$\diagdown$}}}
        {\mathord{\raisebox{2pt}{$\scriptscriptstyle\urcorner$}\hspace{-1pt}\raisebox{-1pt}{$\scriptscriptstyle\diagdown$}}}
        {\mathord{\raisebox{2pt}{$\scriptscriptstyle\urcorner$}\hspace{-1pt}\raisebox{-1pt}{$\scriptscriptstyle\diagdown$}}}
}
\newcommand{\rightaddonly}[1]{
    \big[T_{n,#1}\big]
    \hspace{-1pt}\mathchoice
        {\mathord{\raisebox{2pt}{$\urcorner$}\hspace{-1pt}\raisebox{-1pt}{$\diagdown$}}}
        {\mathord{\raisebox{2pt}{$\urcorner$}\hspace{-1pt}\raisebox{-1pt}{$\diagdown$}}}
        {\mathord{\raisebox{2pt}{$\scriptscriptstyle\urcorner$}\hspace{-1pt}\raisebox{-1pt}{$\scriptscriptstyle\diagdown$}}}
        {\mathord{\raisebox{2pt}{$\scriptscriptstyle\urcorner$}\hspace{-1pt}\raisebox{-1pt}{$\scriptscriptstyle\diagdown$}}}
}
\newcommand{\rightminusonly}[1]{
    \big[T_{n,#1}\big]
    \hspace{-0.1em}\mathchoice
        {\mathord{\raisebox{2pt}{$\diagdown$}\hspace{-5pt}\raisebox{-1pt}{$\lrcorner$}}}
        {\mathord{\raisebox{2pt}{$\diagdown$}\hspace{-5pt}\raisebox{-1pt}{$\lrcorner$}}}
        {\mathord{\raisebox{2pt}{$\scriptscriptstyle\diagdown$}\hspace{-5pt}\raisebox{-1pt}{$\scriptscriptstyle\lrcorner$}}}
        {\mathord{\raisebox{2pt}{$\scriptscriptstyle\diagdown$}\hspace{-5pt}\raisebox{-1pt}{$\scriptscriptstyle\lrcorner$}}}
    \hspace{1pt}
}
\newcommand{\bothminusone}[1]{\mathchoice
        {\mathord{\raisebox{2pt}{$\ulcorner$}\hspace{-4pt}\raisebox{-1pt}{$\diagdown$}}}
        {\mathord{\raisebox{2pt}{$\ulcorner$}\hspace{-4pt}\raisebox{-1pt}{$\diagdown$}}}
        {\mathord{\raisebox{2pt}{$\scriptscriptstyle\ulcorner$}\hspace{-4pt}\raisebox{-1pt}{$\scriptscriptstyle\diagdown$}}}
        {\mathord{\raisebox{2pt}{$\scriptscriptstyle\ulcorner$}\hspace{-4pt}\raisebox{-1pt}{$\scriptscriptstyle\diagdown$}}}
    \hspace{-1pt}\big[T_{n,#1}\big] \hspace{-0.1em}\mathchoice
        {\mathord{\raisebox{2pt}{$\diagdown$}\hspace{-5pt}\raisebox{-1pt}{$\lrcorner$}}}
        {\mathord{\raisebox{2pt}{$\diagdown$}\hspace{-5pt}\raisebox{-1pt}{$\lrcorner$}}}
        {\mathord{\raisebox{2pt}{$\scriptscriptstyle\diagdown$}\hspace{-5pt}\raisebox{-1pt}{$\scriptscriptstyle\lrcorner$}}}
        {\mathord{\raisebox{2pt}{$\scriptscriptstyle\diagdown$}\hspace{-5pt}\raisebox{-1pt}{$\scriptscriptstyle\lrcorner$}}}
    \hspace{1pt}
}
\newcommand{\basegraph}[1]{T_{n,#1}}
\newcommand{\ruleset}[1]{\textup{\textsc{#1}}\xspace}
\newcommand{\snort}{\ruleset{Snort}}
\newcommand{\CGTN}{{\mathscr{N}}}
\newcommand{\CGTP}{{\mathscr{P}}}
\newcommand{\CGTL}{{\mathscr{L}}}
\newcommand{\CGTR}{{\mathscr{R}}}
\begin{document}

\title{\snort Played on Triangular Grids}

\author{Melanie Gauthier}
\address{Mount Saint Vincent University, Canada}

\author{Svenja Huntemann}
\address{Mount Saint Vincent University, Canada}
\email{svenja.huntemann@msvu.ca}

\keywords{Combinatorial game, Snort, outcome class.}
\subjclass[2020]{Primary: 91A46; Secondary: 05C15}

\thanks{The first author's research was supported by the Natural Sciences and Engineering Research Council of Canada through the Undergraduate Student Research Award 590979-2024 and the Atlantic Association for Research in the Mathematical Sciences through the collaborative research group ``Games and Graph Searching in Atlantic Canada''. The second author's research is supported in part by the Natural Sciences and Engineering Research Council of Canada grant 2022-04273.}

\begin{abstract}
    \snort is a two-player game played on a simple graph in which the players take turns colouring vertices in their own colour, with the restriction that two adjacent vertices cannot have opposite colours. We will show that on triangular grids with one or two rows of triangles, and many of their variants, the first player will win when playing optimally.
\end{abstract}

\maketitle

\section{Introduction}

\snort is a placement game played on any finite graph in which two players, called Left and Right, alternately colour vertices. Left colours vertices blue, while Right colours red, such that opposite colours are not adjacent. We will assume the normal play convention, so that the game is over when the next player has no available moves, and the person who played last wins. \snort was introduced by Simon Norton (see \cite{Conway2001,WW}) and is also known as Cats and Dogs.

Schaefer proved in 1978 that finding the winning player of \snort in general is PSPACE-complete \cite{SchaeferPaper}, thus further analysis has restricted which types of graphs to consider. Winning Ways \cite{WW} contains a collection of various \snort positions and their game values, which intuitively indicate which player will win and how much of an advantage they have. Kakihara outlines some basic strategies for \snort in their thesis \cite{Kakihara2010}. Other work has enumerated the number of legal positions when playing \snort on simple graphs \cite{Brown_Cox_ETAL_2019,Lexi_2022}. Uiterwijk solved \snort on Cartesian grids \cite{Uiterwijk_2022_P1} by finding the outcome class, i.e.\ which player wins using optimal strategies.

Motivated by Uiterwijk's work on Cartesian grid, we will be looking at \snort on triangular grids and determining what the outcome classes and some optimal strategies are.

Some of the strategies considered by Kakihara include playing greedy by attempting to reserve as many vertices for yourself as possible, blocking your opponent from reserving more vertices by blocking them on both sides, and when there is symmetry the second player can use a copy cat strategy. We use these ideas and basic strategies in our work, in addition to having used software (\cite{CGSuite,TomRust}) to aid in computing outcome classes and optimal moves in some cases.

Since opposite colours cannot be adjacent, when a player colours a vertex, they also ``reserve'' all neighbouring vertices. We will indicate this by giving those vertices a tinting in light blue or light red.

To simplify the analysis of a game of \snort, we will reduce the graph after each move. First, we will be deleting the claimed vertex after tinting the adjacent vertices. Additionally, a vertex with both a blue and a red neighbour (tinted blue and red) cannot be claimed by either player, so can also be deleted.  

\begin{example}
    \label{ex:SNORT}
    Consider \snort played on $P_6$ with Left playing first, as shown in the diagram below, with some examples moves from both players.
    
    Left chooses to go in one of the two central vertices. This causes that vertex to be deleted from the graph and the adjacent ones tinted blue.
    
    Right responds by choosing to go in vertex 5. We will again delete this vertex, and the ones adjacent are tinted red. However vertex 4 was tinted blue already, which means that neither player can go there and it is also deleted. 
    
    This leaves Left to go in vertex 2, which in turn tints vertex 1, and now Right and Left take turns colouring their tinted vertices. Since Left played last, Left therefore wins in this instance of the game.
    
    \begin{center}
    \begin{tikzpicture}[scale=1.25, vertex/.style={circle, draw, minimum size=5mm, font=\scriptsize},scale=0.70]
    \begin{scope}[shift={(2,0)}]
        \node[vertex] (1) at (0,0) {};
        \node[vertex] (2) at (1,0) {} edge (1);
        \node[vertex] (3) at (2,0) {} edge (2);
        \node[vertex] (4) at (3,0) {} edge (3);
        \node[vertex] (5) at (4,0) {} edge (4);
        \node[vertex] (6) at (5,0) {} edge (5);
    \end{scope}
    \draw[->] (0,-1)--(1,-1);
    \begin{scope}[shift={(2,-1)}]
        \node[vertex] (1) at (0,0) {};
        \node[vertex,fill=blue!20] (2) at (1,0) {} edge (1);
        \node[vertex,fill=blue] (3) at (2,0) {} edge (2);
        \node[vertex,fill=blue!20] (4) at (3,0) {} edge (3);
        \node[vertex] (5) at (4,0) {} edge (4);
        \node[vertex] (6) at (5,0) {} edge (5);
    \end{scope}
        \node at (8,-1) {=};
    \begin{scope}[shift={(9,-1)}]
        \node[vertex] (1) at (0,0) {};
        \node[vertex,fill=blue!20] (2) at (1,0) {} edge (1);
        \node[vertex,fill=blue!20] (4) at (3,0) {};
        \node[vertex] (5) at (4,0) {} edge (4);
        \node[vertex] (6) at (5,0) {} edge (5);
    \end{scope}
    \draw[->] (0,-2)--(1,-2);
    \begin{scope}[shift={(2,-2)}]
        \node[vertex] (1) at (0,0) {};
        \node[vertex,fill=blue!20] (2) at (1,0) {} edge (1);
        \node[vertex, path picture={
            \fill[blue!20] (path picture bounding box.south west) rectangle (path picture bounding box.north);
            \fill[red!20] (path picture bounding box.south) rectangle (path picture bounding box.north east);
        }] (4) at (3,0) {};
        \node[vertex,fill=red] (5) at (4,0) {} edge (4);
        \node[vertex,fill=red!20] (6) at (5,0) {} edge (5);
    \end{scope}
    \node at (8,-2) {=};
    \begin{scope}[shift={(9,-2)}]
        \node[vertex] (1) at (0,0) {};
        \node[vertex,fill=blue!20] (2) at (1,0) {} edge (1);
        \node[vertex,fill=red!20] (6) at (5,0) {};
    \end{scope}
    \draw[->] (0,-3)--(1,-3);
    \begin{scope}[shift={(2,-3)}]
        \node[vertex,fill=blue!20] (1) at (0,0) {};
        \node[vertex,fill=blue] (2) at (1,0) {} edge (1);
        \node[vertex,fill=red!20] (6) at (5,0) {};
    \end{scope}
        \node at (8,-3) {=};
    \begin{scope}[shift={(9,-3)}]
        \node[vertex,fill=blue!20] (1) at (0,0) {};
        \node[vertex,fill=red!20] (6) at (5,0) {};
    \end{scope}
    \draw[->] (0,-4)--(1,-4);
    \begin{scope}[shift={(2,-4)}]
        \node[vertex,fill=blue!20] (1) at (0,0) {};
        \node[vertex,fill=red] (6) at (5,0) {};
    \end{scope}
        \node at (8,-4) {=};
    \begin{scope}[shift={(9,-4)}]
        \node[vertex,fill=blue!20] (1) at (0,0) {};
    \end{scope}
    \draw[->] (0,-5)--(1,-5);
    \begin{scope}[shift={(2,-5)}]
        \node[vertex,fill=blue] (1) at (0,0) {};
    \end{scope}
    \end{tikzpicture}
    \end{center}
\end{example}

When both players play optimally there are four types of results we can get, called \emph{outcome classes}. They are as follows:
\begin{itemize}
        \item $\CGTN$: games in which the first/next player wins no matter who goes first;
        \item $\CGTP$: games in which the second/previous player wins no matter who goes first;
        \item $\CGTL$: games in which Left wins going first or second; and
        \item $\CGTR$: games in which Right wins going first or second.
\end{itemize}

In \cref{ex:SNORT}, both players are playing optimally. Right going first can use the same strategy, thus \snort played on $P_6$ is a first player win.  

\medskip

There are several different ways of displaying and describing triangular grids. We will use the triangulated Cartesian product of two paths as our base and build different types of triangular grids from there. 

The triangulated Cartesian product $G\boxbslash H$ is defined to be the graph with vertex set $V(G)\times V(H)$ and with the edge set consisting of the edges of the Cartesian product of $G$ and $H$, but with one diagonal edge for each Cartesian square \cite{Afzal2012}. It is of note that the direction of the diagonal edge in the definition in \cite{Afzal2012} is unspecified -- for our purposes all graphs will have the diagonal edge going in the same direction.

We will be considering the triangulated Cartesian product of two paths, which results in a Cartesian rectangle with diagonals added. We will also look at variants where some vertices are added to one or both ends such that new triangles are formed without extending the rectangle itself. We will label the vertices in the rectangle with their coordinates, while the vertices added on the left and right are labeled as $L_i$ or $R_i$ (see \cref{fig:label}), where $L_i=(0,i)$ and $R_i=(n+1,i)$.

\begin{figure}[!ht]
    \centering
    \begin{tikzpicture}[scale=.7, vertex/.style={circle, draw, minimum size=3mm, font=\scriptsize}]
        
        \node[vertex,dashed] (1) at (0,4) {\small$L_1$};
        \node[vertex] (2) at (2,4) {$(1,1)$} edge[dashed] (1);
        \node[vertex] (3) at (4,4) {$(2,1)$} edge (2);
        \node[vertex] (4) at (6,4) {$(3,1)$} edge (3);
        \node[vertex] (5) at (8,4) {$(4,1)$} edge (4);
        \node[vertex] (6) at (10,4) {$(5,1)$} edge (5);
        \node[vertex,dashed] (7) at (12,4) {\small$R_1$} edge[dashed] (6);
        
        \node[vertex,dashed] (8) at (0,2) {\small$L_2$} edge[dashed] (1);
        \node[vertex] (9) at (2,2) {$(1,2)$} edge[dashed] (8) edge[dashed] (1) edge (2);
        \node[vertex] (10) at (4,2) {$(2,2)$} edge (9) edge (3) edge (2);
        \node[vertex] (11) at (6,2) {$(3,2)$} edge (10) edge (4) edge (3);
        \node[vertex] (12) at (8,2) {$(4,2)$} edge (11) edge (5) edge (4);
        \node[vertex] (13) at (10,2) {$(5,2)$} edge (12) edge (6) edge (5);
        \node[vertex,dashed] (14) at (12,2) {\small$R_2$} edge[dashed] (13) edge[dashed] (7) edge[dashed] (6);
        
        \node[vertex,dashed] (15) at (0,0) {\small$L_3$} edge[dashed] (8);   
        \node[vertex] (16) at (2,0) {$(1,3)$} edge[dashed] (15) edge[dashed] (8) edge (9);
        \node[vertex] (17) at (4,0) {$(2,3)$} edge (16) edge (10) edge (9);
        \node[vertex] (18) at (6,0) {$(3,3)$} edge (17) edge (11) edge (10);
        \node[vertex] (19) at (8,0) {$(4,3)$} edge (18) edge (12) edge (11);
        \node[vertex] (20) at (10,0) {$(5,3)$} edge (19) edge (13) edge (12);
        \node[vertex,dashed] (21) at (12,0) {\small$R_3$} edge[dashed] (20) edge[dashed] (14) edge[dashed] (13);
        
        \node[vertex] (22) at (2,6) {};
        \node[vertex] (23) at (4,6) {} edge (22);
        \node[vertex,label=above:$P_5$] (24) at (6,6) {} edge (23);
        \node[vertex] (25) at (8,6) {} edge (24);
        \node[vertex] (26) at (10,6) {} edge (25);
        \node[vertex] (27) at (-2,0) {};
        \node[vertex,label=left:$P_3$] (28) at (-2,2) {} edge (27);
        \node[vertex] (29) at (-2,4) {} edge (28);
        
        \node[vertex,dashed] (30) at (14,4) {\small$R_1'$} edge[dashed] (7);   
        \node[vertex,dashed] (31) at (14,2) {\small$R_2'$} edge[dashed] (30) edge[dashed] (14) edge[dashed] (7);
        \node[vertex,dashed] (32) at (14,0) {\small$R_3'$} edge[dashed] (31) edge[dashed] (21) edge[dashed] (14);
    \end{tikzpicture}
    \caption{Labeling of vertices in $P_5\boxbslash P_3$}
    \label{fig:label}
\end{figure}

Formally, we define the triangular grid $T_{n,m}$ as follows:
\begin{definition}
    Let \(T_{n,m}=P_n\boxbslash P_m\) be the graph with vertex set \[V=\{(i,j)\mid i=1,\ldots,n \text{ and } j=1,\ldots,m\}\] and edges
    \begin{align*}
        E=&\hspace{1em}\left\{\left((i_1,j),(i_2,j)\right)\mid i_2=i_1+1,\; i_1=1,\ldots,n-1,\; j=1,\ldots,m \right\}\\
        &\cup\left\{\left((i,j_1),(i,j_2)\right)\mid j_2=j_1+1,\; i=1,\ldots,n,\; j_1=1,\ldots,m-1 \right\}\\
        &\cup\left\{\left((i_1,j_1),(i_2,j_2)\right)\mid i_2=i_1+1,\; j_2=j_1+1,\; i_1=1,\ldots,n-1,\; j=1,\ldots,m \right\}.
    \end{align*}
\end{definition}

We define several variants where vertices and corresponding edges are added to $T_{n,2}$ and $T_{n,3}$ next:
\begin{definition}
    The variant $\oneslant{2}$ has vertex set $V=V(T_{n,2})\cup\{R_2\}$ and edge set $E=E(T_{n,2})\cup\left\{\left((n,1),R_2\right),\left((n,2),R_2\right)\right\}$.

    \medskip
    For the below variants of $\basegraph{3}$ the vertex set is $V(T_{n,3})\cup V'$ and the edge set is $E(T_{n,3})\cup E'$.
    \begin{itemize}
        \item $\oneslant{3}$: $V'=\{R_2, R_3, R_3'\}$ and \[E'=\big\{\left((n,1),R_2\right),\left((n,2),R_2\right),\left((n,2),R_3\right),\left((n,3),R_3\right),(R_2,R_3),(R_2,R_3'),(R_3,R_3')\big\}.\]
        \item $\leftaddoneboth{3}$: $V'=\{L_1,R_2,R_3\}$ and 
        \begin{align*}
            E'=\big\{\left(L_1, (1,1)\right),\left(L_1, (1,2)\right),\left((n,1),R_2\right),\left((n,2),R_2\right),\\\left((n,2),R_3\right),\left((n,3),R_3\right),(R_2,R_3)\big\}.
        \end{align*}
        \item $\bothaddone{3}$: $V'=\{L_1,R_3\}$ and \[E'=\big\{\left(L_1, (1,1)\right),\left(L_1, (1,2)\right),\left((n,2),R_3\right),\left((n,3),R_3\right)\big\}.\]
        \item $\rightaddonly{3}$: $V'=\{R_3\}$ and \[E'=\big\{\left((n,2),R_3\right),\left((n,3),R_3\right)\big\}.\]
        \item $\rightminusonly{3}$: 
        $V'=\{R_2,R_3\}$ and \[E'=\big\{\left((n,1),R_2\right),\left((n,2),R_2\right),\left((n,2),R_3\right),\left((n,3),R_3\right),\left(R_2,R_3\right)\big\}.\]
        \item $\bothminusone{3}$: 
        $V'=\{L_1,L_2,R_2,R_3\}$ and 
        \begin{align*}
        E'=\big\{\left(L_1, (1,1)\right),\left(L_1, (1,2)\right),\left(L_2, (1,2)\right),\left(L_2, (1,3)\right),\left(L_1, L_2\right),\\\left((n,1),R_2\right),\left((n,2),R_2\right),\left((n,2),R_3\right),\left((n,3),R_3\right),\left(R_2,R_3\right)  \big\}.
        \end{align*}
    \end{itemize}
\end{definition}

In this paper we only consider variations of the base graph that allow for us to be able to add the diagonal edges for a full ``triangle'', i.e.\ those that are connected induced subgraphs of a larger triangular Cartesian grid with the given set of vertices. Some of the variations that would result in incomplete triangles are:
\begin{itemize}
    \item for $\basegraph{2}$: adding $L_2$ or $R_1$, or adding $R_2$ and $R_2'$;
    \item for $\basegraph{3}$: adding $L_2$ or $R_2$, adding $L_3$ or $R_1$, adding $L_2$ and $L_3$ or $R_1$ and $R_2$, or adding $R_2$, $R_3$, $R_2'$ and $R_3'$.
\end{itemize}

Note that with this restriction the variant $\oneslant{2}$ is the only one we need to consider for $\basegraph{2}$ since adding $L_1$ is equivalent by symmetry to adding $R_2$. Similarly, we will only consider the six variants listed for $\basegraph{3}$.

Note that when describing winning strategies for playing \snort on these graphs, it is often easier to see what the strategy is by drawing the graph with equilateral triangles -- we will show both drawings in those cases.

\medskip

For the majority of the proofs we will utilize a copycat strategy, where the second player ``copies'' the first player's move. In our cases, this will be possible since the graph will consists of two isomorphic connected components, except possibly some shading in the colour of the second player. The following proposition shows that such a graph is a second player win.

\begin{proposition}\label{thm:symmetry}
    If the graph $H$ is a disjoint union of $G$ and $G'$, where $G$ and $G'$ are isomorphic except possibly for some tinting in the colour of the second player, then \snort on $H$ is a second player win using the copycat strategy.
\end{proposition}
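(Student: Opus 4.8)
The plan is to prove the statement by exhibiting the copycat strategy and tracking a one‑sided structural invariant between the two components; the key realization is that the two components need not stay isomorphic during play, but a suitable containment between their ``playable'' parts does persist.

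Fix a graph isomorphism $\phi\colon G\to G'$ and, having labelled the two colours $c_1$ (first player) and $c_2$ (second player), note that by hypothesis the only tinting anywhere in $H$ is in colour $c_2$. The second player's strategy is the obvious one: when the first player claims a vertex $w$ in one component, the second player claims the mirror vertex $\phi(w)$ (resp.\ $\phi^{-1}(w)$) in the other. For a component $X$ of the current position write $F_X$ for the set of vertices playable by the first player in $X$ (present and not $c_2$-tinted) and $S_X$ for the set playable by the second player (present and not $c_1$-tinted). I would maintain, as an invariant holding before every first-player move, that $\phi(F_G)\subseteq S_{G'}$ and $\phi^{-1}(F_{G'})\subseteq S_G$. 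At the outset this holds because the present vertices of $G$ and $G'$ correspond under $\phi$ and there is no $c_1$-tinting at all, so $S_{G'}$ and $S_G$ are simply the full present-vertex sets.

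The two steps of the induction are then: \emph{(legality)} if the invariant holds and the first player moves at $w$ --- in $G$, say --- then $w\in F_G$, hence $\phi(w)\in S_{G'}$, so the mirrored reply is legal (and symmetrically if the move is in $G'$); and \emph{(preservation)} after the first-player move and its mirror, the invariant is restored. The second step is the heart of the matter: claiming $w$ with $c_1$ adds $c_1$-tinting on the neighbourhood $N(w)$ in one component and deletes whatever there becomes doubly tinted, while the mirror move adds $c_2$-tinting on $\phi(N(w))$ in the other component and deletes whatever there becomes doubly tinted, so the two components may genuinely cease to be isomorphic; what one checks is that any vertex leaving $F_G$ has its mirror still in $S_{G'}$ and vice versa, which follows because the sets of newly-$c_1$-tinted vertices on one side and newly-$c_2$-tinted vertices on the other are exactly mirror images, so a vertex that is newly blocked for the first player corresponds to one that is at worst newly \emph{reserved} --- not blocked --- for the second player. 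Once both steps are in hand, the second player has a legal reply immediately after every first-player move; the game being finite, the first player is the one left without a move, so under normal play the second player wins. (This is the usual ``$X+(-X)$ is a second-player win by mirroring'' phenomenon, with the mild twist that extra tinting in the mirroring player's own colour is harmless to them.)

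I expect the main obstacle to be the bookkeeping in the preservation step: once tinting of both colours is present, one has to be careful about which deletions are forced on which side and to check that the desynchronization between $G$ and $G'$ only ever works against the first player --- i.e.\ that the containments $\phi(F_G)\subseteq S_{G'}$ and $\phi^{-1}(F_{G'})\subseteq S_G$, rather than a naive ``$G$ and $G'$ stay isomorphic'', are what survives. Everything else is a short verification.
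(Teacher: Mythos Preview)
Your approach is correct and is the same copycat strategy the paper uses; the paper's proof is a two-line sketch, and your explicit invariant $\phi(F_G)\subseteq S_{G'}$, $\phi^{-1}(F_{G'})\subseteq S_G$ is precisely what makes that sketch rigorous once the two components desynchronize. (One minor slip: in the preservation paragraph the contrapositive you want is that any vertex \emph{leaving $S_{G'}$} has its preimage also leaving $F_G$, not the direction you wrote---but your stated invariant is the right one and the verification goes through as you outline.)
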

\begin{proof}
    Wherever the first player moves, the second player will mimic this move in the other component. Since the only restrictions originally due to tinting are for the first player and the first player's moves are always legal, the second player's moves will also be allowed. Using this strategy, the second player will then make the last move and win.
\end{proof}

In order for us to use the copy cat strategy, we will split the graph using a move by the first player (who then becomes the second player) and the one-hand-tied principle. The one-hand-tied principle states that if a player has a winning strategy in a game $G$ without using some of the options, then they have a winning strategy in the game $G$ (see for example \cite{LiP2019}). 

The first player will think of some of their tinted vertices adjacent to their first move as having been deleted from the graph, resulting in two isomorphic connected components on which they play copycat. Since they can win using this strategy by ignoring some of their options, they can certainly win the overall game.

\section{\snort played on $\basegraph{2}$ and its Variant}

In this section, we will show that \snort played on graphs of the form $\basegraph{2}$ or its variation is a first player win. These graphs can be represented on the triangular grid graph as one row of triangles.  

\begin{proposition}\label{thm:1row}
    When playing \snort on $\basegraph{2}$ or its variation $\oneslant{2}$, the outcome class is a first player win.
\end{proposition}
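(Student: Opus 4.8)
The plan is to exhibit an explicit first move for the first player (say Left, who colours blue) that splits the remaining playable graph into two isomorphic components, after which \cref{thm:symmetry} together with the one-hand-tied principle finishes the argument. Since $\basegraph{2}$ is a single strip of triangles on vertex rows $j=1,2$ and columns $i=1,\dots,n$, the natural candidate is a central vertex. I would first treat the two parities of $n$ separately, as the symmetric split will differ slightly.

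First I would handle $\basegraph{2}$ when $n$ is even, say $n=2k$. Left colours the vertex $(k,1)$ (or $(k,2)$). Tinting its neighbours and then, via the one-hand-tied principle, pretending that the tinted vertices $(k,2)$, $(k+1,1)$, $(k+1,2)$ have been deleted, the graph falls into a left piece on columns $1,\dots,k-1$ and a right piece on columns $k+2,\dots,n$, each of which is a copy of $\basegraph[k-2]{2}$ plus the same small boundary shading; one checks these two pieces are isomorphic (reflection across the vertical axis) with any tinting being only in Left's colour, so they satisfy the hypotheses of \cref{thm:symmetry}. Hence the position after Left's move is a second-player win, and since Left is now the second player, Left wins overall. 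For $n$ odd, say $n=2k+1$, I would instead have Left colour a vertex so that the deletion of a carefully chosen subset of its tinted neighbours leaves two isomorphic halves — here the column $k+1$ sits in the middle, so colouring $(k+1,1)$ and discarding the tinted vertices $(k+1,2)$, $(k+2,1)$, $(k+2,2)$ again yields a left half on columns $1,\dots,k$ and a right half on columns $k+2,\dots,n$, each a copy of $\basegraph[k]{2}$ with matching Left-tinting. The small cases ($n=1,2$) should be checked by hand or cited as computed by software.

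For the variant $\oneslant{2}$, which is $\basegraph{2}$ with the extra vertex $R_2$ adjacent to $(n,1)$ and $(n,2)$, the graph has $2n+1$ vertices with a near-symmetry whose axis passes through the triangle $\{(n,1),(n,2),R_2\}$ — equivalently, drawn with equilateral triangles it is a strip of $2n-1$ small triangles, an odd number, so there is a genuine middle triangle. I would have Left play the apex of that middle triangle and delete the two other (Left-tinted) vertices of it, splitting $\oneslant{2}$ into two isomorphic halves, then invoke \cref{thm:symmetry} exactly as before. The main obstacle I anticipate is purely bookkeeping: verifying precisely which tinted neighbours to discard so that the two resulting components are genuinely isomorphic as graphs (accounting for the diagonal edges all pointing the same way, which breaks the naive left–right reflection symmetry of the underlying square grid) and confirming that no red tinting is introduced. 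This is where drawing the picture with equilateral triangles, as the authors suggest, makes the symmetric split transparent; once the correct first move and discarded set are identified, the rest is immediate from the already-proved \cref{thm:symmetry}.
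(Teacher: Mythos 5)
Your overall plan --- one well-chosen first move, then the one-hand-tied principle and \cref{thm:symmetry} --- is exactly the paper's, and your even case for $\basegraph{2}$ and both parities for $\oneslant{2}$ are sound (for $\oneslant{2}$ your split, which discards the two tinted vertices of the middle triangle, differs harmlessly from the paper's, which keeps those two vertices and instead ignores the edge joining them; both are legitimate). However, the odd case for $\basegraph{2}$ is wrong as written. With $n=2k+1$, after playing $(k+1,1)$ you discard the tinted vertices $(k+1,2)$, $(k+2,1)$, $(k+2,2)$; this removes columns $k+1$ \emph{and} $k+2$ entirely, so the right piece is on columns $k+3,\dots,n$, not on columns $k+2,\dots,n$ as you then assert. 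The two components are $T_{k,2}$ and $T_{k-1,2}$, which are not isomorphic, so the copycat argument cannot be invoked. The repair is to discard less, which is what the paper does: after playing $(k+1,1)$, ignore only its tinted neighbour $(k+1,2)$, so that exactly the middle column $k+1$ disappears; the halves are then the two copies of $T_{k,2}$ on columns $1,\dots,k$ and $k+2,\dots,n$, with the right half carrying blue tinting at $(k+2,1)$ and $(k+2,2)$, which \cref{thm:symmetry} explicitly allows.

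Two smaller points. In the even case the parenthetical alternative move $(k,2)$ does not support the split you describe: $(k+1,1)$ is not adjacent to $(k,2)$, hence would not be tinted, and you may only ``pretend deleted'' vertices Right can never use; so only the move $(k,1)$ (or, by the $180^\circ$ rotational symmetry of $\basegraph{2}$, the paper's choice $(n/2+1,2)$) works here. Also, the isomorphism between the two halves is a translation (or that $180^\circ$ rotation), not a reflection across a vertical axis --- reflection reverses the diagonals, as you yourself worry about later --- and each half on columns $1,\dots,k-1$ is $T_{k-1,2}$, not $T_{k-2,2}$; neither slip affects the argument, since the tinting mismatch between the halves is in Left's colour only, which is all \cref{thm:symmetry} requires. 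With the odd case corrected as above, your proof coincides with the paper's.
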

\begin{proof}
    For graphs where $n<3$ the solution is trivial, and can be checked using software like CGSuite. For graphs where $n\geq3$, the proof is split into four cases, two for each type of graph based on the parity of the length. In each case we will outline where the optimal move is for the first player. We will assume without loss of generality that the first player is Left -- Right's strategy just reverses colours.
    
    After Left's first move, she can think of the graph as split into two symmetric halves by the one hand tied principle, except possibly some additional blue shading in one or both halves. This split will happen naturally if Left ignores some (or all) of the blue tinted vertices in her strategy. 
    
    Once she considers the graph split, using \cref{thm:symmetry} Left can copy Right's moves on the other half. This will ensure that Left's moves are always permitted because Right will always move in a legal position, and thus Left will move last.

    First consider \snort played on $\basegraph{2}$ when $n$ is odd, as shown in  \cref{fig:P2_odd_odd}.
    The optimal first move for Left in this situation is to go in $\left(\lceil n/2\rceil,1\right)$ or in $\left(\lceil n/2\rceil,2\right)$. 
    This allows for Left to think of the graph as if it were split into two equal halves, aside from some shading, by ignoring the vertices $(\lceil n/2\rceil,1)$ and $(\lceil n/2 \rceil,2)$.

    \begin{figure}[!ht]
        \centering
        \begin{tikzpicture}[line cap=round,line join=round,x=1cm,y=1cm,rotate=90, vertex/.style={circle, draw, minimum size=3mm, font=\scriptsize},scale=.8]
        \node [vertex] (1) at (9,8) {};
        \node [vertex] (2) at (9,7) {} edge (1);
        \node [vertex,fill=blue!30] (3) at (9,6) {} edge (2);
        \node[vertex,draw=orange!120,line width=1pt,fill=blue] (4) at (9,5) {} edge (3);
        \node [vertex,fill=blue!30] (5) at (9,4) {} edge (4);
        \node [vertex] (6) at (9,3) {} edge (5);
        \node [vertex] (7) at (9,2) {} edge (6);
        \node [vertex] (8) at (10,2) {} edge (7);
        \node [vertex] (9) at (10,3) {} edge (8) edge (7) edge (6);
        \node [vertex] (10) at (10,4) {} edge (9) edge (6) edge (5);
        \node[vertex,draw=orange!120,line width=1pt,fill=blue!30] (11) at (10,5) {} edge (10) edge (5) edge[orange,line width=2pt] (4);
        \node [vertex,fill=blue!30] (12) at (10,6) {} edge (11) edge (4)  edge (3);
        \node [vertex] (13) at (10,7) {} edge (12) edge (3) edge (2);
        \node [vertex] (14) at (10,8) {} edge (13) edge (2) edge (1);
        \node at (9.5,1.5) {$\cdots$};
        \node at (9.5,8.5) {$\cdots$};
        \end{tikzpicture}
        \caption{First player move on $\basegraph{2}$ when $n$ is odd, and where to split the graph.}
        \label{fig:P2_odd_odd}
    \end{figure}
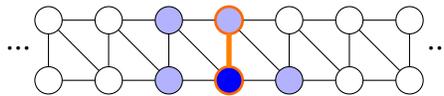

    Next consider when \snort is played on the same graph but $n$ is even, as shown in \cref{fig:P2_even_even}. Here Left should go in $\left(n/2+1,2\right)$, using the same idea as the previous case she should act as if the vertices $(n/2,1)$, $(n/2,2)$, $(n/2+1,1)$, and $(n/2+1,2)$ have been deleted from the graph. 
    
    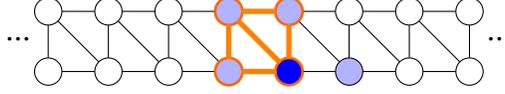
\begin{figure}[!ht]
        \centering
        \begin{tikzpicture}[line cap=round,line join=round,x=1cm,y=1cm,rotate=90, vertex/.style={circle, draw, minimum size=3mm, font=\scriptsize},scale=.8]
        \node [vertex] (1) at  (9,9) {};
        \node [vertex] (2) at  (9,8) {}edge (1);
        \node [vertex] (3) at  (9,7) {}edge (2);
        \node[vertex,draw=orange!120,line width=1pt,fill=blue!30] (4) at  (9,6) {}edge (3);
        \node[vertex,draw=orange!120,line width=1pt,fill=blue] (5) at (9,5) {} edge[orange,line width=2pt] (4);
        \node [vertex,fill=blue!30] (6) at  (9,4) {}edge (5);
        \node [vertex] (7) at  (9,3) {}edge (6);
        \node [vertex] (8) at  (9,2) {}edge (7);
        \node [vertex] (9) at  (10,2) {}edge (8);
        \node [vertex] (10) at  (10,3) {}edge (9) edge (7) edge (8);
        \node [vertex] (11) at  (10,4) {}edge (10)  edge (6) edge (7);
        \node[vertex,draw=orange!120,line width=1pt,fill=blue!30] (12) at  (10,5) {} edge (11) edge[orange,line width=2pt] (5) edge (6);
        \node[vertex,draw=orange!120,line width=1pt,fill=blue!30] (13) at  (10,6) {}edge[orange,line width=2pt] (12) edge[orange,line width=2pt] (4) edge[orange,line width=2pt] (5);
        \node [vertex] (14) at  (10,7) {}edge (13) edge (3) edge (4);
        \node [vertex] (15) at  (10,8) {}edge (14) edge (2) edge (3);
        \node [vertex] (16) at  (10,9) {}edge (15) edge (1) edge (2);
        \node at (9.5,1.5) {$\cdots$};
        \node at (9.5,9.5) {$\cdots$};
        \end{tikzpicture}
        \caption{First player move on $\basegraph{2}$ when $n$ is even, and where to split the graph.}
        \label{fig:P2_even_even}
    \end{figure}
    
    \vspace{5pt}

    Now we can look at the variant $\oneslant{2}$. If $n$ is odd, like in \cref{fig:P2_odd_n}, Left should go in the position $\left(\lceil n/2\rceil,1\right)$. She can once again think of the graph as split into two halves that are mirror symmetric by ignoring the vertex $\left(\lceil n/2\rceil,1\right)$, and the edge $\left(\left(\lceil n/2\rceil,2\right),\left(\lceil n/2\rceil+1,2\right)\right)$. 
    
    \begin{figure}[!ht]
        \centering
        \begin{tikzpicture}[line cap=round,line join=round,x=1cm,y=1cm,rotate=90, vertex/.style={circle, draw, minimum size=3mm, font=\scriptsize},scale=.8]
        \node [vertex] (1) at (9,8) {};
        \node [vertex] (2) at (9,7) {} edge (1);
        \node [vertex,fill=blue!30] (3) at (9,6) {} edge (2);
        \node [vertex,fill=blue!30] (4) at (9,5) {} edge (3);
        \node [vertex] (5) at (9,4) {} edge (4);
        \node [vertex] (6) at (9,3) {} edge (5);
        \node [vertex] (10) at (10,4) {} edge (6) edge (5);
        \node [vertex,fill=blue!30] (11) at (10,5) {} edge (10) edge (5) edge (4);
        \node[vertex,draw=orange!120,line width=1pt,fill=blue] (12) at (10,6) {} edge (11) edge (4)  edge (3);
        \node [vertex,fill=blue!30] (13) at (10,7) {} edge (12) edge (3) edge (2);
        \node [vertex] (14) at (10,8) {} edge (13) edge (2) edge (1);
        \draw[orange,line width=2pt] (12) -- (9,5.5);
        \node at (9.5,3) {$\cdots$};
        \node at (9.5,8.5) {$\cdots$};
        \node at (9.5,2.25) {$=$};
        \begin{scope}[shift={(0,-7)}]
            \node [vertex] (1) at (9,8) {};
            \node [vertex] (2) at (9,7) {} edge (1);
            \node [vertex,fill=blue!30] (3) at (9,6) {} edge (2);
            \node [vertex,fill=blue!30] (4) at (9,5) {} edge (3);
            \node [vertex] (5) at (9,4) {} edge (4);
            \node [vertex] (6) at (9,3) {} edge (5);
            \node [vertex] (10) at (10,3.5) {} edge (6) edge (5);
            \node [vertex,fill=blue!30] (11) at (10,4.5) {} edge (10) edge (5) edge (4);
            \node[vertex,draw=orange!120,line width=1pt,fill=blue] (12) at (10,5.5) {} edge (11) edge (4)  edge (3);
            \node [vertex,fill=blue!30] (13) at (10,6.5) {} edge (12) edge (3) edge (2);
            \node [vertex] (14) at (10,7.5) {} edge (13) edge (2) edge (1);
            \draw[orange,line width=2pt] (12) -- (9,5.5);
            \node at (9.5,2.5) {$\cdots$};
            \node at (9.5,8.5) {$\cdots$};
        \end{scope}
        \end{tikzpicture}
        \caption{First player move when $\oneslant{2}$ when $n$ is odd, and where to split the graph.}
        \label{fig:P2_odd_n}
    \end{figure}

    Lastly consider \snort played on $\oneslant{2}$ and $n$ is even, as shown in  \cref{fig:P2_even_n}
    The optimal first move for Left in this situation is to go in $\left(n/2+1,2\right)$. 
    This allows for Left to think of the graph as if it were split into two halves which are mirror symmetric, by deleting the vertex $\left(n/2+1,2\right)$, and the edge $\left(\left(n/2,1\right),\left(n/2+1,1\right)\right)$.
    
    \begin{figure}[!ht]
        \centering
        \begin{tikzpicture}[line cap=round,line join=round,x=1cm,y=1cm,rotate=90, vertex/.style={circle, draw, minimum size=3mm, font=\scriptsize},scale=.8]
        \node [vertex] (1) at (9,8) {};
        \node [vertex] (2) at (9,7) {} edge (1);
        \node [vertex,fill=blue!30] (3) at (9,6) {} edge (2);
        \node [vertex,draw=orange!120,line width=1pt,fill=blue] (4) at (9,5) {} edge (3);
        \node [vertex,fill=blue!30] (5) at (9,4) {} edge (4);
        \node [vertex] (6) at (9,3) {} edge (5);
        \node [vertex] (7) at (9,2) {} edge (6);
        \node [vertex] (10) at (10,4) {} edge (6) edge (5);
        \node [vertex,fill=blue!30] (11) at (10,5) {} edge (10) edge (5) edge (4);
        \node[vertex,fill=blue!30] (12) at (10,6) {} edge (11) edge (4)  edge (3);
        \node [vertex] (13) at (10,7) {} edge (12) edge (3) edge (2);
        \node [vertex] (14) at (10,8) {} edge (13) edge (2) edge (1);
        \node [vertex] (15) at (10,3) {} edge (6) edge (10) edge (7);
        \draw[orange,line width=2pt] (4) -- (10,5.5);
        \node at (9.5,2) {$\cdots$};
        \node at (9.5,8.5) {$\cdots$};
        \node at (9.5,1.25) {$=$};
        \begin{scope}[shift={(0,-8)}]
            \node [vertex] (1) at (9,8) {};
            \node [vertex] (2) at (9,7) {} edge (1);
            \node [vertex,fill=blue!30] (3) at (9,6) {} edge (2);
            \node [vertex,draw=orange!120,line width=1pt,fill=blue] (4) at (9,5) {} edge (3);
            \node [vertex,fill=blue!30] (5) at (9,4) {} edge (4);
            \node [vertex] (6) at (9,3) {} edge (5);
            \node [vertex] (7) at (9,2) {} edge (6);
            \node [vertex] (10) at (10,3.5) {} edge (6) edge (5);
            \node [vertex,fill=blue!30] (11) at (10,4.5) {} edge (10) edge (5) edge (4);
            \node[vertex,fill=blue!30] (12) at (10,5.5) {} edge (11) edge (4)  edge (3);
            \node [vertex] (13) at (10,6.5) {} edge (12) edge (3) edge (2);
            \node [vertex] (14) at (10,7.5) {} edge (13) edge (2) edge (1);
            \node [vertex] (15) at (10,2.5) {} edge (6) edge (10) edge (7);
            \draw[orange,line width=2pt] (4) -- (10,5);
            \node at (9.5,1.5) {$\cdots$};
            \node at (9.5,8.5) {$\cdots$};
        \end{scope}
        \end{tikzpicture}
        \caption{First player move when $\oneslant{2}$ when $n$ is even, and where to split the graph.}
        \label{fig:P2_even_n}
    \end{figure}
    
    Now in each situation Left can now use \cref{thm:symmetry} to copy Rights moves. This means that Left will play last and win, making \snort on $\basegraph{2}$ and its variation a first player win. 
\end{proof}

\section{\snort played on $\basegraph{3}$ and its variants}

There are a total of seven different variations of $\basegraph{3}$ that we will consider in this section. We will give the winning strategy for the first player when playing \snort on each type of graph in a separate lemma, with the strategy in each case split depending on the parity of $n$.

\begin{lemma}\label{lem:basegraph}
    When playing \snort on empty graphs of the form $\basegraph{3}$, the outcome class is a first player win.
\end{lemma}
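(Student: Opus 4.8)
The plan is to follow the exact template of \cref{thm:1row}: the first player (say Left, by colour symmetry) makes one carefully chosen move, and then, invoking the one-hand-tied principle, pretends that some of the blue-tinted neighbours of that move have been deleted so that the remaining board falls into two isomorphic components, on which she plays copycat via \cref{thm:symmetry}. The small cases (say $n\le 2$) give degenerate splits and can be checked directly or with software, so I would assume $n$ is larger. The structural facts I would lean on are that $\basegraph{3}$ has a $180^{\circ}$ rotational symmetry $\sigma\colon(i,j)\mapsto(n+1-i,\,4-j)$, and that every edge of $\basegraph{3}$ lies inside a single column or joins two consecutive columns; hence deleting any $\sigma$-invariant set of ``middle'' vertices that meets every column-to-column cut splits the graph into two pieces interchanged by $\sigma$, and since all tinting in sight is blue (Left's colour, and she is the second player for the remainder of the game) \cref{thm:symmetry} applies.

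For $n$ odd I would have Left play the central vertex $(\lceil n/2\rceil,2)$, which is the fixed point of $\sigma$, and then ignore its two vertical neighbours $(\lceil n/2\rceil,1)$ and $(\lceil n/2\rceil,3)$ (both blue-tinted). Deleting the whole middle column disconnects the board into columns $1,\dots,(n-1)/2$ and columns $(n+3)/2,\dots,n$, each a copy of $T_{(n-1)/2,\,3}$, interchanged by $\sigma$; the leftover blue tinting near the cut is in Left's colour, so \cref{thm:symmetry} shows the position is in $\CGTN$.

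For $n$ even I would have Left play $(n/2+1,2)$ and ignore the three blue-tinted neighbours $(n/2,1)$, $(n/2,2)$, $(n/2+1,3)$ (keeping the other three neighbours $(n/2+1,1)$, $(n/2+2,2)$, $(n/2+2,3)$ in play). One then checks that after these deletions the vertex $(n/2,3)$ is adjacent only to column $n/2-1$ and the vertex $(n/2+1,1)$ only to column $n/2+2$, so the board splits into $A=\big(\text{columns }1,\dots,n/2-1\big)\cup\{(n/2,3)\}$ and $B=\big(\text{columns }n/2+2,\dots,n\big)\cup\{(n/2+1,1)\}$, with $\sigma(A)=B$ (in particular $\sigma(n/2,3)=(n/2+1,1)$). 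Again all residual tinting is blue, so \cref{thm:symmetry} finishes, and the position is in $\CGTN$.

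I expect the even case to be the main obstacle: unlike in \cref{thm:1row}, a single first move cannot tint a whole pair of middle columns, so the split is not into two clean triangular grids but into two ``paw-tailed'' grids, and one has to verify carefully that the two uncovered straggler vertices $(n/2,3)$ and $(n/2+1,1)$ land in the correct components and are matched to each other by $\sigma$. A secondary point to be careful about is that the vertices Left pretends to delete must be exactly blue-tinted neighbours of her first move — so that the opponent genuinely cannot use them either — together with the column and end-of-grid bookkeeping (and the separate treatment of small $n$).
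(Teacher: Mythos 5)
Your proposal matches the paper's proof essentially verbatim: the same first moves ($(\lceil n/2\rceil,2)$ for odd $n$, $(n/2+1,2)$ for even $n$), the same pretend-deletions of the played vertex together with its blue-tinted neighbours (the whole middle column when $n$ is odd; $(n/2,1),(n/2,2),(n/2+1,3)$ when $n$ is even, leaving the two straggler vertices $(n/2,3)$ and $(n/2+1,1)$ attached to opposite halves), and the same appeal to the one-hand-tied principle plus \cref{thm:symmetry} for the copycat finish, with the $180^\circ$ rotation realizing the isomorphism the paper depicts as its ``line of symmetry.'' Your explicit verification that the two leftover vertices attach only to their own halves and are swapped by the rotation is exactly the check the paper encodes in its figures, so the argument is correct and not a different route.
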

\begin{proof}  
    When $n=1$, the first player wins by moving in the central vertex, thereby preventing their opponent from moving.
    
    For the case when $n\geq2$, assume without loss of generality that Left goes first.
    
    First let us consider \snort played on $\basegraph{3}$ when $n\geq 3$ is odd, as shown in \cref{fig:P3_odd_/same}. 
    The optimal first move for Left in this situation is to go in $\left(\lceil n/2\rceil,2\right)$. This allows for Left to think of the graph as split into two halves, that are symmetric aside from some blue shading, by deleting the vertices
    \[\left(\left\lceil\frac{n}{2}\right\rceil,1\right),\; \left(\left\lceil\frac{n}{2}\right\rceil,2\right), \text{ and }\left(\left\lceil\frac{n}{2}\right\rceil,3\right).\]
    
    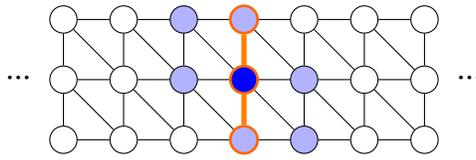
\begin{figure}[!ht]
    \centering
    \begin{tikzpicture}[scale=.4, vertex/.style={circle, draw, minimum size=2.5mm, font=\scriptsize}]
        \node[vertex] (1) at (0,4) {};
        \node[vertex] (2) at (2,4) {} edge (1);
        \node[vertex,fill=blue!30] (3) at (4,4) {} edge (2);
        \node[vertex,draw=orange!120,line width=1pt,fill=blue!30] (4) at (6,4) {} edge (3);
        \node[vertex] (5) at (8,4) {} edge (4);
        \node[vertex] (6) at (10,4) {} edge (5);
        \node[vertex] (7) at (12,4) {} edge (6);
        \node[vertex] (8) at (0,2) {} edge (1);
        \node[vertex] (9) at (2,2) {} edge (8) edge (1) edge (2);
        \node[vertex,fill=blue!30] (10) at (4,2) {} edge (9) edge (3) edge (2);
        \node[vertex,draw=orange!120,line width=1pt,fill=blue] (11) at (6,2) {} edge (10) edge[orange,line width=2pt] (4) edge (3);
        \node[vertex,fill=blue!30] (12) at (8,2) {} edge (11) edge (5) edge (4);
        \node[vertex] (13) at (10,2) {} edge (12) edge (6) edge (5);
        \node[vertex] (14) at (12,2) {} edge (13) edge (7) edge (6);
        \node[vertex] (15) at (0,0) {} edge (8);
        \node[vertex] (16) at (2,0) {} edge (15) edge (8) edge (9);
        \node[vertex] (17) at (4,0) {} edge (16) edge (10) edge (9);
        \node[vertex,draw=orange!120,line width=1pt,fill=blue!30] (18) at (6,0) {} edge (17) edge[orange,line width=2pt] (11) edge (10);
        \node[vertex,fill=blue!30] (19) at (8,0) {} edge (18) edge (12) edge (11);
        \node[vertex] (20) at (10,0) {} edge (19) edge (13) edge (12);
        \node[vertex] (21) at (12,0) {} edge (20) edge (14) edge (13);
        \node at (-1.5,2) {$\cdots$};
        \node at (13.5,2) {$\cdots$};
    \end{tikzpicture}
    \caption{First player move on $\basegraph{3}$ when $n$ is odd, and the line of symmetry}
    \label{fig:P3_odd_/same}
    \end{figure}
    
    Then let us consider \snort played on $\basegraph{3}$ when $n\geq 2$ is even, as shown in \cref{fig:P3_even/same}.
    The optimal first move for Left in this situation is to go in $(n/2+1,2)$. This allows for Left to think of the graph as split into two halves that are rotational symmetric aside from some shading by deleting the vertices:
    \begin{align*}
        \left(\frac{n}{2},1\right),\left(\frac{n}{2},2\right), \left(\frac{n}{2}+1,2\right), \text{ and }\left(\frac{n}{2}+1,3\right).
    \end{align*}
    
    \begin{figure}[!ht]
    \centering
    \begin{tikzpicture}[scale=.4, vertex/.style={circle, draw, minimum size=2.5mm, font=\scriptsize}]
        
        \node[vertex] (1) at (0,4) {};
        \node[vertex] (2) at (2,4) {} edge (1);
        \node[vertex,draw=orange!120,line width=1pt,fill=blue!30] (3) at (4,4) {} edge (2);
        \node[vertex,fill=blue!30] (4) at (6,4) {} edge (3);
        \node[vertex] (5) at (8,4) {} edge (4);
        \node[vertex] (6) at (10,4) {} edge (5);
        
        \node[vertex] (8) at (0,2) {} edge (1);
        \node[vertex] (9) at (2,2) {} edge (8) edge (1) edge (2);
        \node[vertex,draw=orange!120,line width=1pt,fill=blue!30] (10) at (4,2) {} edge (9) edge[orange,line width=2pt] (3) edge (2);
        \node[vertex,draw=orange!120,line width=1pt,fill=blue] (11) at (6,2) {} edge[orange,line width=2pt] (10) edge (4) edge[orange,line width=2pt] (3);
        \node[vertex,fill=blue!30] (12) at (8,2) {} edge (11) edge (5) edge (4);
        \node[vertex] (13) at (10,2) {} edge (12) edge (6) edge (5);
        
        \node[vertex] (15) at (0,0) {} edge (8);
        \node[vertex] (16) at (2,0) {} edge (15) edge (8) edge (9);
        \node[vertex] (17) at (4,0) {} edge (16) edge (10) edge (9);
        \node[vertex,draw=orange!120,line width=1pt,fill=blue!30] (18) at (6,0) {} edge (17) edge[orange,line width=2pt] (11) edge[orange,line width=2pt] (10);
        \node[vertex,fill=blue!30] (19) at (8,0) {} edge (18) edge (12) edge (11);
        \node[vertex] (20) at (10,0) {} edge (19) edge (13) edge (12);
        \node at (-1,2) {$\cdots$};
        \node at (11,2) {$\cdots$};
        \node at (12,2) {$=$};
        \begin{scope}[shift={(14,0)}]
            \node at (-1,2) {$\cdots$};
            \node at (11,2) {$\cdots$};
            
            \node[vertex] (1) at (1,4) {};
            \node[vertex] (2) at (3,4) {} edge (1);
            \node[vertex,draw=orange!120,line width=1pt,fill=blue!30] (3) at (5,4) {} edge (2);
            \node[vertex,fill=blue!30] (4) at (7,4) {} edge (3);
            \node[vertex] (5) at (9,4) {} edge (4);
            \node[vertex] (6) at (11,4) {} edge (5);
            
            \node[vertex] (8) at (0,2) {} edge (1);
            \node[vertex] (9) at (2,2) {} edge (8) edge (1) edge (2);
            \node[vertex,draw=orange!120,line width=1pt,fill=blue!30] (10) at (4,2) {} edge (9) edge[orange,line width=2pt] (3) edge (2);
            \node[vertex,draw=orange!120,line width=1pt,fill=blue] (11) at (6,2) {} edge[orange,line width=2pt] (10) edge (4) edge[orange,line width=2pt] (3);
            \node[vertex,fill=blue!30] (12) at (8,2) {} edge (11) edge (5) edge (4);
            \node[vertex] (13) at (10,2) {} edge (12) edge (6) edge (5);
            
            \node[vertex] (15) at (-1,0) {} edge (8);
            \node[vertex] (16) at (1,0) {} edge (15) edge (8) edge (9);              
            \node[vertex] (17) at (3,0) {} edge (16) edge (10) edge (9);
            \node[vertex,draw=orange!120,line width=1pt,fill=blue!30] (18) at (5,0) {} edge (17) edge[orange,line width=2pt] (11) edge[orange,line width=2pt] (10);
            \node[vertex,fill=blue!30] (19) at (7,0) {} edge (18) edge (12) edge (11);
            \node[vertex] (20) at (9,0) {} edge (19) edge (13) edge (12);
        \end{scope}
    \end{tikzpicture}
    \caption{First player move on $\basegraph{3}$ when $n$ is even, and its line of symmetry}
    \label{fig:P3_even/same}
    \end{figure}

    Thus Left can consider the graph after her first move as being split into two symmetric halves, with some blue shading. Using a copycat strategy, Left will then win by \cref{thm:symmetry}.
\end{proof}

\vspace{5pt}

\begin{lemma}\label{lem:leftaddoneboth}
    When playing \snort on empty graphs of the form $\leftaddoneboth{3}$, the outcome class is a first player win.
\end{lemma}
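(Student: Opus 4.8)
The plan is to argue exactly as in \cref{lem:basegraph}: reduce to $n$ large (the small cases being checkable directly with CGSuite), assume without loss of generality that Left moves first, and split into the two parity cases $n$ even and $n$ odd. In each case Left makes a single opening move, then invokes the one-hand-tied principle to regard the position as a disjoint union of two connected pieces that are isomorphic up to some blue tinting, plays copycat on them, and wins by \cref{thm:symmetry}; this places the game in $\CGTN$.

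The only genuinely new point, compared with \cref{lem:basegraph}, is that the two ends of $\leftaddoneboth{3}$ are not mirror images of one another: the left end contributes just the single vertex $L_1$ (one extra triangle against the first column), whereas the right end contributes $R_2$, $R_3$ and the edge $R_2R_3$ (three extra triangles against the last column). Hence Left cannot cut the board through its geometric centre — the ``left half'' and ``right half'' would then fail to be isomorphic. Instead she plays her opening move slightly to the right of centre, roughly one column over, in a middle vertex of the second row, and then by the one-hand-tied principle treats a small set of her own blue-tinted vertices (her move, together with the two or three tinted neighbours lying on the intended cut) as deleted. As in the proof of \cref{lem:basegraph}, one checks that removing this set disconnects the board into a left component made up of the first several columns together with $L_1$ (and whatever vertex of the cut column survives) and a right component made up of the last several columns together with $R_2$ and $R_3$, with the column counts arranged so that the left component, carrying the smaller bump, gets one more grid column.

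The heart of the matter is then to check that these two components are isomorphic up to blue tinting. For this one uses that $\basegraph{3}$ (and any $T_{k,3}$) is invariant under the $180^\circ$ rotation $(i,j)\mapsto(k+1-i,4-j)$, that this rotation carries an $L_1$-type end (a vertex joined to rows $1$ and $2$ at one side) to an $R_3$-type end (a vertex joined to rows $2$ and $3$ at the other side), and that the remaining structure on the right — $R_2$ together with the edge $R_2R_3$ — is precisely what is needed to account for the extra grid column handed to the left half. Redrawing both components with equilateral triangles makes the isomorphism visible, in the same spirit as the ``$=$'' reductions in \cref{fig:P3_even/same}; once the two halves have been identified, \cref{thm:symmetry} gives a second-player win and hence a first-player win for the original game.

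The step I expect to be the main obstacle is precisely this bookkeeping: determining, separately for $n$ even and $n$ odd, the exact opening vertex and the exact set of tinted vertices to suppress so that the board genuinely falls into two pieces and those two pieces are genuinely isomorphic. The conceptual content (the asymmetry between the two ends, corrected by an off-centre cut together with the $180^\circ$ symmetry of $T_{k,3}$) is clear, but getting the offset and the parity of the suppressed set to line up correctly in both cases is fiddly, and, as in the earlier lemmas, the two parities will require slightly different opening moves and slightly different cuts.
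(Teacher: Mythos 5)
Your overall framework is exactly the paper's: handle small $n$ by software, assume Left moves first, split on the parity of $n$, make one opening move, use the one-hand-tied principle to ignore a few blue-tinted vertices so that the board falls into two components isomorphic up to tinting, and finish by copycat via \cref{thm:symmetry}. The problem is that the entire content of this lemma is the bookkeeping you explicitly defer: naming the opening vertex and the suppressed set for each parity and verifying that the two resulting components are genuinely isomorphic. As written you never produce these data, so what you have is a plan, not a proof. For the record, the paper's choices are: for even $n\geq 4$, play $\left(n/2+1,2\right)$ and ignore $\left(n/2,1\right)$, $\left(n/2+1,2\right)$, $\left(n/2+1,3\right)$; for odd $n\geq 3$, play $\left(\lceil n/2\rceil,2\right)$ --- the exact central column, not a vertex ``one column right of centre'' --- and ignore the six vertices $\left(\lceil n/2\rceil-1,1\right)$, $\left(\lceil n/2\rceil,1\right)$, $\left(\lceil n/2\rceil,2\right)$, $\left(\lceil n/2\rceil+1,2\right)$, $\left(\lceil n/2\rceil,3\right)$, $\left(\lceil n/2\rceil+1,3\right)$. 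The asymmetry of the two ends is absorbed by this skewed (staircase-shaped) cut, not by an off-centre opening move.

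The heuristics you offer in place of the verification would also steer you wrong. With the paper's cut, both components span the same number of grid columns ($n/2$ each, respectively $(n-1)/2$ each), contrary to your claim that the left half receives one extra column compensated by ``$R_2$ together with the edge $R_2R_3$'' --- that accounting cannot balance in any case, since a column has three vertices. Moreover the isomorphism between the halves is not the $180^\circ$ rotation $(i,j)\mapsto(k+1-i,4-j)$: after the cut, the surviving row-$1$ vertex of the right cut column plays the role of $L_1$ for the right component, while the surviving row-$2$ and row-$3$ vertices of the left cut column play the role of $R_2,R_3$ for the left component, so each half is a smaller copy of the $\leftaddoneboth{3}$ shape and the two are matched by a plain horizontal translation. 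Without an explicit cut and this identification, the appeal to \cref{thm:symmetry} is unjustified, so there is a genuine gap between your sketch and a proof of the lemma.
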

\begin{proof} 
    The two cases when $n\leq 2$ can be checked by hand or using software such as CGSuite. 
    
    Now assume $n\geq3$ and assume without loss of generality that Left moves first.
    
    First let us consider \snort played on $\leftaddoneboth{3}$ when $n\geq 4$ is even, as shown in \cref{fig:P3_odd/diffrent}.
    The optimal first move for Left in this situation is to go in $\left(n/2+1,2\right)$. This allows for Left to think of the graph as split into two halves that are equal aside from some shading by deleting the vertices:
    \begin{align*}
        \left(\frac{n}{2},1\right), \left(\frac{n}{2}+1,2\right), \text{ and }\left(\frac{n}{2}+1,3\right).
    \end{align*}

    \begin{figure}[!ht]
    \centering
    \begin{tikzpicture}[scale=.4, vertex/.style={circle, draw, minimum size=2.5mm, font=\scriptsize}]
        
        \node[vertex] (1) at (0,4) {};
        \node[vertex] (2) at (2,4) {} edge (1);
        \node[vertex,draw=orange!120,line width=1pt,fill=blue!30] (3) at (4,4) {} edge (2);
        \node[vertex,fill=blue!30] (4) at (6,4) {} edge (3);
        \node[vertex] (5) at (8,4) {} edge (4);
        \node[vertex] (6) at (10,4) {} edge (5);
        \node[vertex] (1a) at (-2,4) {} edge (1);
        
        \node[vertex] (8) at (0,2) {} edge (1) edge (1a);
        \node[vertex] (9) at (2,2) {} edge (8) edge (1) edge (2);
        \node[vertex,fill=blue!30] (10) at (4,2) {} edge (9) edge (3) edge (2);
        \node[vertex,draw=orange!120,line width=1pt,fill=blue] (11) at (6,2) {} edge (10) edge (4) edge[orange,line width=2pt] (3);
        \node[vertex,fill=blue!30] (12) at (8,2) {} edge (11) edge (5) edge (4);
        \node[vertex] (13) at (10,2) {} edge (12) edge (6) edge (5);
        \node[vertex] (14) at (12,2) {} edge (13) edge (6);
        
        \node[vertex] (15) at (0,0) {} edge (8);
        \node[vertex] (16) at (2,0) {} edge (15) edge (8) edge (9);
        \node[vertex] (17) at (4,0) {} edge (16) edge (10) edge (9);
        \node[vertex,draw=orange!120,line width=1pt,fill=blue!30] (18) at (6,0) {} edge (17) edge[orange,line width=2pt] (11) edge (10);
        \node[vertex,fill=blue!30] (19) at (8,0) {} edge (18) edge (12) edge (11);
        \node[vertex] (20) at (10,0) {} edge (19) edge (13) edge (12);
        \node[vertex] (21) at (12,0) {} edge (20) edge (14) edge (13);
        \node at (-2.5,2) {$\cdots$};
        \node at (13,2) {$\cdots$};
        \node at (14,2) {$=$};
        \begin{scope}[shift={(16.5,0)}]
            \node at (-1.5,2) {$\cdots$};
            \node at (13,2) {$\cdots$};
            \node[vertex] (1) at (1,4) {};
            \node[vertex] (2) at (3,4) {} edge (1);
            \node[vertex,draw=orange!120,line width=1pt,fill=blue!30] (3) at (5,4) {} edge (2);
            \node[vertex,fill=blue!30] (4) at (7,4) {} edge (3);
            \node[vertex] (5) at (9,4) {} edge (4);
            \node[vertex] (6) at (11,4) {} edge (5);
            \node[vertex] (1a) at (-1,4) {} edge (1);
            
            \node[vertex] (8) at (0,2) {} edge (1) edge (1a);
            \node[vertex] (9) at (2,2) {} edge (8) edge (1) edge (2);
            \node[vertex,fill=blue!30] (10) at (4,2) {} edge (9) edge (3) edge (2);
            \node[vertex,draw=orange!120,line width=1pt,fill=blue] (11) at (6,2) {} edge (10) edge (4) edge[orange,line width=2pt] (3);
            \node[vertex,fill=blue!30] (12) at (8,2) {} edge (11) edge (5) edge (4);
            \node[vertex] (13) at (10,2) {} edge (12) edge (6) edge (5);
            \node[vertex] (14) at (12,2) {} edge (13) edge (6);
            
            \node[vertex] (15) at (-1,0) {} edge (8);
            \node[vertex] (16) at (1,0) {} edge (15) edge (8) edge (9);
            \node[vertex] (17) at (3,0) {} edge (16) edge (10) edge (9);
            \node[vertex,draw=orange!120,line width=1pt,fill=blue!30] (18) at (5,0) {} edge (17) edge[orange,line width=2pt] (11) edge (10);
            \node[vertex,fill=blue!30] (19) at (7,0) {} edge (18) edge (12) edge (11);
            \node[vertex] (20) at (9,0) {} edge (19) edge (13) edge (12);
            \node[vertex] (21) at (11,0) {} edge (20) edge (14) edge (13);
        \end{scope}
    \end{tikzpicture}
    \caption{First player move on $\leftaddoneboth{3}$ when $n$ is even, and its line of symmetry}
    \label{fig:P3_odd/diffrent}
    \end{figure}

    Then let us consider \snort played on $\leftaddoneboth{3}$ when $n\geq 3$ is odd, as shown in \cref{fig:P3_even/diffrent}.
    The optimal first move for Left in this situation is to go in $\left(\lceil n/2\rceil,2\right)$. 
    This allows for Left to think of the graph as split into two halves that are equal aside from some shading by deleting the vertices:
    \begin{align*}
        \left(\left\lceil\frac{n}{2}\right\rceil-1,1\right), \left(\left\lceil\frac{n}{2}\right\rceil,1\right), \left(\left\lceil\frac{n}{2}\right\rceil,2\right), \left(\left\lceil\frac{n}{2}\right\rceil+1,2\right), \left(\left\lceil\frac{n}{2}\right\rceil,3\right), \text{ and } \left(\left\lceil\frac{n}{2}\right\rceil+1,3\right).
    \end{align*}
   
    \begin{figure}[!ht]
    \centering
    \begin{tikzpicture}[scale=.4, vertex/.style={circle, draw, minimum size=2.5mm, font=\scriptsize}]
        
        \node[vertex] (1) at (0,4) {};
        \node[vertex] (2) at (2,4) {} edge (1);
        \node[vertex,draw=orange!120,line width=1pt,fill=blue!30] (3) at (4,4) {} edge (2);
        \node[vertex,draw=orange!120,line width=1pt,fill=blue!30] (4) at (6,4) {} edge[orange,line width=2pt] (3);
        \node[vertex] (5) at (8,4) {} edge (4);
        \node[vertex] (6) at (10,4) {} edge (5);
        
        \node[vertex] (9) at (2,2) {} edge (1) edge (2);
        \node[vertex,fill=blue!30] (10) at (4,2) {} edge (9) edge (3) edge (2);
        \node[vertex,draw=orange!120,line width=1pt,fill=blue] (11) at (6,2) {} edge (10) edge[orange,line width=2pt] (4) edge[orange,line width=2pt] (3);
        \node[vertex,draw=orange!120,line width=1pt,fill=blue!30] (12) at (8,2) {} edge[orange,line width=2pt] (11) edge (5) edge[orange,line width=2pt] (4);
        \node[vertex] (13) at (10,2) {} edge (12) edge (6) edge (5);
        \node[vertex] (14) at (12,2) {} edge (13) edge (6);
          
        \node[vertex] (16) at (2,0) {} edge (9);
        \node[vertex] (17) at (4,0) {} edge (16) edge (10) edge (9);
        \node[vertex,draw=orange!120,line width=1pt,fill=blue!30] (18) at (6,0) {} edge (17) edge[orange,line width=2pt] (11) edge (10);
        \node[vertex,draw=orange!120,line width=1pt,fill=blue!30] (19) at (8,0) {} edge[orange,line width=2pt] (18) edge[orange,line width=2pt] (12) edge[orange,line width=2pt] (11);
        \node[vertex] (20) at (10,0) {} edge (19) edge (13) edge (12);
        \node[vertex] (21) at (12,0) {} edge (20) edge (14) edge (13);
        \node at (-.5,2) {$\cdots$};
        \node at (13,2) {$\cdots$};
        \node at (14,2) {$=$};
        \begin{scope}[shift={(14.5,0)}]
            \node at (.5,2) {$\cdots$};
            \node at (13,2) {$\cdots$};
            \node[vertex] (1) at (1,4) {};
            \node[vertex] (2) at (3,4) {} edge (1);
            \node[vertex,draw=orange!120,line width=1pt,fill=blue!30] (3) at (5,4) {} edge (2);
            \node[vertex,draw=orange!120,line width=1pt,fill=blue!30] (4) at (7,4) {} edge[orange,line width=2pt] (3);
            \node[vertex] (5) at (9,4) {} edge (4);
            \node[vertex] (6) at (11,4) {} edge (5);
            
            \node[vertex] (9) at (2,2) {} edge (1) edge (2);
            \node[vertex,fill=blue!30] (10) at (4,2) {} edge (9) edge (3) edge (2);
            \node[vertex,draw=orange!120,line width=1pt,fill=blue] (11) at (6,2) {} edge (10) edge[orange,line width=2pt] (4) edge[orange,line width=2pt] (3);
            \node[vertex,draw=orange!120,line width=1pt,fill=blue!30] (12) at (8,2) {} edge[orange,line width=2pt] (11) edge (5) edge[orange,line width=2pt] (4);
            \node[vertex] (13) at (10,2) {} edge (12) edge (6) edge (5);
            \node[vertex] (14) at (12,2) {} edge (13) edge (6);
              
            \node[vertex] (16) at (1,0) {} edge (9);
            \node[vertex] (17) at (3,0) {} edge (16) edge (10) edge (9);
            \node[vertex,draw=orange!120,line width=1pt,fill=blue!30] (18) at (5,0) {} edge (17) edge[orange,line width=2pt] (11) edge (10);
            \node[vertex,draw=orange!120,line width=1pt,fill=blue!30] (19) at (7,0) {} edge[orange,line width=2pt] (18) edge[orange,line width=2pt] (12) edge[orange,line width=2pt] (11);
            \node[vertex] (20) at (9,0) {} edge (19) edge (13) edge (12);
            \node[vertex] (21) at (11,0) {} edge (20) edge (14) edge (13);   
        \end{scope}
    \end{tikzpicture}
    \caption{First player move on $\leftaddoneboth{3}$ when $n$ is odd, and its line of symmetry}
    \label{fig:P3_even/diffrent}
    \end{figure}

    In either case, using \cref{thm:symmetry}, Left will win. Thus this game is a first player win.
\end{proof}

\vspace{5pt}
    
\begin{lemma}\label{lem:bothaddone}
    When playing \snort on empty graphs of the form $\bothaddone{3}$, the outcome class is a first player win. 
\end{lemma}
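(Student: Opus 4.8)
The plan is to mirror the structure of \cref{lem:basegraph}: have the first player (Left, without loss of generality) make a single opening move near the centre, invoke the one-hand-tied principle to ignore some of her tinted (hence, for Right, unplayable) vertices, and then let her play copycat on the resulting disjoint union of two isomorphic components, winning by \cref{thm:symmetry}. The structural fact that makes this work is that $\bothaddone{3}$ is invariant under the $180^{\circ}$ rotation $\rho\colon(i,j)\mapsto(n+1-i,\,4-j)$, which fixes row $2$, swaps rows $1$ and $3$, reverses the column order, and interchanges $L_1=(0,1)$ with $R_3=(n+1,3)$; note that reversing columns while also swapping rows $1$ and $3$ is exactly what is needed to preserve the uniformly oriented diagonal edges. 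The cases $n\le 2$ are dealt with directly (by hand or with CGSuite), and we split the rest by the parity of $n$.

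For $n\ge 3$ odd, Left plays the unique $\rho$-fixed vertex $\left(\lceil n/2\rceil,2\right)$; this blue-tints the other two vertices of column $\lceil n/2\rceil$, and by one-hand-tied Left treats both of them as deleted. Deleting the whole of column $\lceil n/2\rceil$ leaves no edge between the columns to its left and those to its right, so the board splits into the component carried by columns $1,\dots,\lceil n/2\rceil-1$ together with $L_1$, and the component carried by columns $\lceil n/2\rceil+1,\dots,n$ together with $R_3$. The map $\rho$ sends the first component onto the second, and it also matches their residual blue tinting: the surviving tinted neighbours of Left's move are $\left(\lceil n/2\rceil-1,2\right)$ and $\left(\lceil n/2\rceil-1,1\right)$ on the left, which $\rho$ carries to $\left(\lceil n/2\rceil+1,2\right)$ and $\left(\lceil n/2\rceil+1,3\right)$ on the right. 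Hence \cref{thm:symmetry} applies and Left wins.

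For $n\ge 4$ even there is no central vertex, so Left plays $\left(n/2+1,2\right)$ and, exactly as in \cref{lem:basegraph}, additionally ignores the blue-tinted vertices $\left(n/2,1\right)$, $\left(n/2,2\right)$ and $\left(n/2+1,3\right)$ (each a neighbour of her move). One checks that deleting this staircase again separates the board into a left component (columns $1,\dots,n/2$ and $L_1$, with column $n/2$ reduced to its row-$3$ vertex) and a right component (columns $n/2+1,\dots,n$ and $R_3$, with column $n/2+1$ reduced to its row-$1$ vertex), and that $\rho$ carries the left component onto the right one (in particular $\left(n/2,3\right)\mapsto\left(n/2+1,1\right)$). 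All the blue tinting that survives lies in the right component; since blue is the colour of the copycat (second) player, this is precisely the situation allowed by \cref{thm:symmetry}, so Left wins again. As elsewhere, I would accompany each parity case with the equilateral-triangle drawing showing the move and the cut.

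The only real obstacle is the adjacency bookkeeping: verifying in each parity class that every vertex Left chooses to ignore is genuinely blue-tinted (so that neither player can ever use it), that the chosen set of deletions disconnects the grid into exactly two pieces, and that $\rho$ restricts to a graph isomorphism between those pieces carrying the leftover tinting into the configuration permitted by \cref{thm:symmetry}. Once that is checked, one-hand-tied upgrades the copycat win on the reduced disjoint union to a win in the original game, so \snort on $\bothaddone{3}$ is a first player win.
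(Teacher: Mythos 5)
Your proposal is correct and follows essentially the same route as the paper's proof: the same opening moves $(\lceil n/2\rceil,2)$ for odd $n$ and $(n/2+1,2)$ for even $n$, one-hand-tied deletion of blue-tinted vertices around that move, and the copycat argument of \cref{thm:symmetry} via the $180^\circ$ rotation. The only cosmetic difference is in the odd case, where you discard the entire central column while the paper deletes just the played vertex and two incident edges (keeping the tinted column ends in the two halves); both splits give isomorphic components, so the argument is the same.
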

\begin{proof} 
    When $n=1$, the first player will play in the central vertex and prevent their opponent from moving.
    
    When $n\geq2$, assume without loss of generality that Left moves first.

    First let us consider \snort played on the empty graph $\bothaddone{3}$ when $n\geq 3$ is odd, as shown in \cref{fig:P3_cen_odd}.
    The optimal first move for Left in this situation is to move in $\left(\lceil n/2\rceil,2\right)$. Left will then think of the graph as split by deleting the vertex $\left(\lceil n/2\rceil,2\right)$ and the edges $\left(\left(\lceil n/2\rceil-1,1\right),\left(\lceil n/2\rceil,1\right)\right)$ and $\left(\left(\lceil n/2\rceil,3\right),\left(\lceil n/2+1\rceil,3\right)\right)$. 
    
    \begin{figure}[!ht]
    \centering
    \begin{tikzpicture}[scale=.4, vertex/.style={circle, draw, minimum size=2.5mm, font=\scriptsize}]
        
        \node[vertex] (1) at (0,4) {};
        \node[vertex] (2) at (2,4) {} edge (1);
        \node[vertex,fill=blue!30] (3) at (4,4) {} edge (2);
        \node[vertex,fill=blue!30] (4) at (6,4) {} edge (3);
        \node[vertex] (5) at (8,4) {} edge (4);
        \node[vertex] (6) at (10,4) {} edge (5);
        
        \node[vertex] (9) at (2,2) {} edge (1) edge (2);
        \node[vertex,fill=blue!30] (10) at (4,2) {} edge (9) edge (3) edge (2);
        \node[vertex,draw=orange!120,line width=1pt,fill=blue] (11) at (6,2) {} edge (10) edge (4) edge (3);
        \node[vertex,fill=blue!30] (12) at (8,2) {} edge (11) edge (5) edge (4);
        \node[vertex] (13) at (10,2) {} edge (12) edge (6) edge (5);
        
        \node[vertex] (16) at (2,0) {} edge (9);
        \node[vertex] (17) at (4,0) {} edge (16) edge (10) edge (9);
        \node[vertex,fill=blue!30] (18) at (6,0) {} edge (17) edge (11) edge (10);
        \node[vertex,fill=blue!30] (19) at (8,0) {} edge (18) edge (12) edge (11);
        \node[vertex] (20) at (10,0) {} edge (19) edge (13) edge (12);
        \node[vertex] (21) at (12,0) {} edge (20) edge (13);
        \draw[orange,line width=2pt] (5,4) -- (11);
        \draw[orange,line width=2pt] (7,0) -- (11);
        \node at (-.5,2) {$\cdots$};
        \node at (12.5,2) {$\cdots$};
        \node at (13.5,2) {$=$};
        \begin{scope}[shift={(14,0)}]
            \node at (.5,2) {$\cdots$};
            \node at (11.5,2) {$\cdots$};
            \node[vertex] (1) at (1,4) {};
            \node[vertex] (2) at (3,4) {} edge (1);
            \node[vertex,fill=blue!30] (3) at (5,4) {} edge (2);
            \node[vertex,fill=blue!30] (4) at (7,4) {} edge (3);
            \node[vertex] (5) at (9,4) {} edge (4);
            \node[vertex] (6) at (11,4) {} edge (5);
            
            \node[vertex] (9) at (2,2) {} edge (1) edge (2);
            \node[vertex,fill=blue!30] (10) at (4,2) {} edge (9) edge (3) edge (2);
            \node[vertex,draw=orange!120,line width=1pt,fill=blue] (11) at (6,2) {} edge (10) edge (4) edge (3);
            \node[vertex,fill=blue!30] (12) at (8,2) {} edge (11) edge (5) edge (4);                
            \node[vertex] (13) at (10,2) {} edge (12) edge (6) edge (5);
            
            \node[vertex] (16) at (1,0) {} edge (9);
            \node[vertex] (17) at (3,0) {} edge (16) edge (10) edge (9);
            \node[vertex,fill=blue!30] (18) at (5,0) {} edge (17) edge (11) edge (10);
            \node[vertex,fill=blue!30] (19) at (7,0) {} edge (18) edge (12) edge (11);
            \node[vertex] (20) at (9,0) {} edge (19) edge (13) edge (12);
            \node[vertex] (21) at (11,0) {} edge (20) edge (13);
            \draw[orange,line width=2pt] (6,4) -- (11);
            \draw[orange,line width=2pt] (6,0) -- (11);
        \end{scope}
    \end{tikzpicture}
    \caption{First player move on $\bothaddone{3}$ when $n$ is odd, and its line of symmetry}
    \label{fig:P3_cen_odd}
    \end{figure}
    
    \vspace{5pt}
    
    Then let us consider the case when $n\geq 2$ is even, as shown in figure \cref{fig:P3_cen_even}.
    The optimal first move for Left in this situation is to move in $\left(n/2+1,2\right)$. Left will then think of the graph as split by deleting the vertices
    \begin{align*}
        \left(\frac{n}{2},1\right), \left(\frac{n}{2},2\right), \left(\frac{n}{2}+1,2\right), \text{ and }\left(\frac{n}{2}+1,3\right).
    \end{align*}

    \begin{figure}[!ht]
    \centering
    \begin{tikzpicture}[scale=.4, vertex/.style={circle, draw, minimum size=2.5mm, font=\scriptsize}]
        
        \node[vertex] (1) at (0,4) {};
        \node[vertex] (2) at (2,4) {} edge (1);
        \node[vertex,draw=orange!120,line width=1pt,fill=blue!30] (3) at (4,4) {} edge (2);
        \node[vertex,fill=blue!30] (4) at (6,4) {} edge (3);
        \node[vertex] (5) at (8,4) {} edge (4);
        \node[vertex] (6) at (10,4) {} edge (5);
        \node[vertex] (1a) at (-2,4) {} edge (1);
        
        \node[vertex] (8) at (0,2) {} edge (1) edge (1a);
        \node[vertex] (9) at (2,2) {} edge (8) edge (1) edge (2);
        \node[vertex,draw=orange!120,line width=1pt,fill=blue!30] (10) at (4,2) {} edge (9) edge[orange,line width=2pt] (3) edge (2);
        \node[vertex,draw=orange!120,line width=1pt,fill=blue] (11) at (6,2) {} edge[orange,line width=2pt] (10) edge (4) edge[orange,line width=2pt] (3);
        \node[vertex,fill=blue!30] (12) at (8,2) {} edge (11) edge (5) edge (4);
        \node[vertex] (13) at (10,2) {} edge (12) edge (6) edge (5);
        
        \node[vertex] (15) at (0,0) {} edge (8);
        \node[vertex] (16) at (2,0) {} edge (15) edge (8) edge (9);
        \node[vertex] (17) at (4,0) {} edge (16) edge (10) edge (9);
        \node[vertex,draw=orange!120,line width=1pt,fill=blue!30] (18) at (6,0) {} edge (17) edge[orange,line width=2pt] (11) edge[orange,line width=2pt] (10);
        \node[vertex,fill=blue!30] (19) at (8,0) {} edge (18) edge (12) edge (11);
        \node[vertex] (20) at (10,0) {} edge (19) edge (13) edge (12);
        \node[vertex] (21) at (12,0) {} edge (20) edge (13);
        \node at (-2.5,2) {$\cdots$};
        \node at (12.5,2) {$\cdots$};
        \node at (13.5,2) {$=$};
        \begin{scope}[shift={(16,0)}]
            \node at (-1.5,2) {$\cdots$};
            \node at (12,2) {$\cdots$};
            
            \node[vertex] (1) at (1,4) {};
            \node[vertex] (2) at (3,4) {} edge (1);
            \node[vertex,draw=orange!120,line width=1pt,fill=blue!30] (3) at (5,4) {} edge (2);
            \node[vertex,fill=blue!30] (4) at (7,4) {} edge (3);
            \node[vertex] (5) at (9,4) {} edge (4);
            \node[vertex] (6) at (11,4) {} edge (5);
            \node[vertex] (1a) at (-1,4) {} edge (1);
            
            \node[vertex] (8) at (0,2) {} edge (1) edge (1a);
            \node[vertex] (9) at (2,2) {} edge (8) edge (1) edge (2);
            \node[vertex,draw=orange!120,line width=1pt,fill=blue!30] (10) at (4,2) {} edge (9) edge[orange,line width=2pt] (3) edge (2);
            \node[vertex,draw=orange!120,line width=1pt,fill=blue] (11) at (6,2) {} edge[orange,line width=2pt] (10) edge (4) edge[orange,line width=2pt] (3);
            \node[vertex,fill=blue!30] (12) at (8,2) {} edge (11) edge (5) edge (4);
            \node[vertex] (13) at (10,2) {} edge (12) edge (6) edge (5);
            
            \node[vertex] (15) at (-1,0) {} edge (8);
            \node[vertex] (16) at (1,0) {} edge (15) edge (8) edge (9);
            \node[vertex] (17) at (3,0) {} edge (16) edge (10) edge (9);
            \node[vertex,draw=orange!120,line width=1pt,fill=blue!30] (18) at (5,0) {} edge (17) edge[orange,line width=2pt] (11) edge[orange,line width=2pt] (10);
            \node[vertex,fill=blue!30] (19) at (7,0) {} edge (18) edge (12) edge (11);
            \node[vertex] (20) at (9,0) {} edge (19) edge (13) edge (12);
            \node[vertex] (21) at (11,0) {} edge (20) edge (13);
        \end{scope}
    \end{tikzpicture}
    \caption{First player move on $\bothaddone{3}$ when $n$ is even, and its line of symmetry}
    \label{fig:P3_cen_even}
    \end{figure}

    Left thinking of splitting the graphs in this way allows for her to use \cref{thm:symmetry}, allowing Left to copy Right's moves and therefore playing last and winning. 
\end{proof}

\vspace{5pt}
    
\begin{lemma}\label{lem:bothminusone}
    When playing \snort on empty graphs of the form $\bothminusone{3}$, the outcome class is a first player win.
\end{lemma}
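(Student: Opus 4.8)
The plan is to follow the template of \cref{lem:basegraph} and the subsequent lemmas: check the small cases ($n\le 2$, say) with software, reduce to the case that Left moves first, and split on the parity of $n$. In each parity case I will exhibit a first move for Left after which the one-hand-tied principle lets her regard the board as a disjoint union of two isomorphic pieces, so that she wins by copycat via \cref{thm:symmetry}.

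The structural fact that makes this work is that $\bothminusone{3}$ is invariant under the $180^\circ$ rotation $\rho\colon(i,j)\mapsto(n+1-i,\,4-j)$. This map is an automorphism of $\basegraph{3}$ (it permutes the horizontal, vertical, and diagonal edges among themselves), and it carries the left decoration $L_1=(0,1)$, $L_2=(0,2)$ onto the right decoration $R_3=(n+1,3)$, $R_2=(n+1,2)$, preserving every added edge. Thus, unlike in \cref{lem:leftaddoneboth}, the two ends of the board match and a genuinely symmetric split is available.

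For odd $n\ge 3$, Left moves in $(\lceil n/2\rceil,2)$, the unique fixed vertex of $\rho$; by one-hand-tied she then ignores the other two vertices of the middle column, $(\lceil n/2\rceil,1)$ and $(\lceil n/2\rceil,3)$. Removing these three vertices deletes every edge incident to column $\lceil n/2\rceil$, and since $\basegraph{3}$ has no edges skipping a column, this separates columns $1,\dots,\lceil n/2\rceil-1$ from columns $\lceil n/2\rceil+1,\dots,n$, with $L_1,L_2$ (resp.\ $R_2,R_3$) still attached only to column $1$ (resp.\ column $n$); the restriction of $\rho$ is then an isomorphism from the left half to the right half. For even $n\ge 4$, Left moves in $(n/2+1,2)$ and ignores the tinted vertices $(n/2,1)$, $(n/2,2)$, and $(n/2+1,3)$ (the played vertex $(n/2+1,2)$ being removed as well). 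One checks that $(n/2,3)$ then remains joined only to columns $\le n/2-1$ and $(n/2+1,1)$ only to columns $\ge n/2+2$, so the board again splits into a left half (columns $1,\dots,n/2-1$, the vertex $(n/2,3)$, and $L_1,L_2$) and a right half ($(n/2+1,1)$, columns $n/2+2,\dots,n$, and $R_2,R_3$), which $\rho$ interchanges and hence which are isomorphic. In both cases the two halves differ only by some blue tinting --- tinting in the colour of the second player of the residual game, namely Left --- which is exactly what \cref{thm:symmetry} permits, so Left wins the copycat game and therefore the original; reversing colours gives the symmetric statement for Right going first.

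The bulk of the work is routine bookkeeping: confirming that the vertices Left ignores are in fact tinted by her opening move, that deleting them disconnects $\bothminusone{3}$ (the diagonal edges are the only subtle point), and that $\rho$ sends one component exactly onto the other. The only genuine obstacle I anticipate is the small-$n$ boundary: the case $n=1$, and the even case $n=2$, where the left half degenerates to $\{L_1,L_2,(1,3)\}$ and the right half to $\{(2,1),R_2,R_3\}$, I expect to settle directly or with CGSuite rather than from the general argument above.
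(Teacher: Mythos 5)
Your proposal is correct and takes essentially the same approach as the paper: the same opening moves $\left(\lceil n/2\rceil,2\right)$ (odd $n$) and $\left(n/2+1,2\right)$ (even $n$), the one-hand-tied principle, and the copycat argument of \cref{thm:symmetry}, with the small cases handled separately. The only (harmless) difference is in the odd case, where you discard the tinted vertices $\left(\lceil n/2\rceil,1\right)$ and $\left(\lceil n/2\rceil,3\right)$ entirely, whereas the paper keeps them and instead cuts the edges $\left(\left(\lceil n/2\rceil-1,1\right),\left(\lceil n/2\rceil,1\right)\right)$ and $\left(\left(\lceil n/2\rceil,3\right),\left(\lceil n/2\rceil+1,3\right)\right)$; both splits produce isomorphic halves, and your explicit rotation $\rho(i,j)=(n+1-i,4-j)$ makes precise the symmetry the paper conveys through its figures.
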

\begin{proof}
    When $n=1$, the first player will play in the central vertex and prevent their opponent from moving. 
    
    When $n\geq2$, assume without loss of generality that Left goes first.

    When $n$ is odd, the optimal first move for Left is in $\left(\lceil n/2\rceil,2\right)$. This allows for Left to think of the graph as if it were split into two equal graphs aside from some shading by the one hand tied principle. This can be done by deleting the vertex $\left(\lceil n/2\rceil,2\right)$ and the edges $\left(\left(\lceil n/2\rceil-1,1\right),\left(\lceil n/2\rceil,1\right)\right)$, and $\left(\left(\lceil n/2\rceil,3\right),\left(\lceil n/2+1\rceil,3\right)\right)$.    
    
    \begin{figure}[!ht]
    \centering
    \begin{tikzpicture}[scale=.4, vertex/.style={circle, draw, minimum size=2.5mm, font=\scriptsize}]
        
        \node[vertex] (1) at (0,4) {};
        \node[vertex] (2) at (2,4) {} edge (1);  
        \node[vertex,fill=blue!30] (3) at (4,4) {} edge (2);
        \node[vertex,fill=blue!30] (4) at (6,4) {} edge (3);
        \node[vertex] (5) at (8,4) {} edge (4);
        \node[vertex] (6) at (10,4) {} edge (5);
        
        \node[vertex] (8) at (0,2) {} edge (1);
        \node[vertex] (9) at (2,2) {} edge (1) edge (2) edge (8);
        \node[vertex,fill=blue!30] (10) at (4,2) {} edge (9) edge (3) edge (2);
        \node[vertex,draw=orange!120,line width=1pt,fill=blue] (11) at (6,2) {} edge (10) edge (4) edge (3);
        \node[vertex,fill=blue!30] (12) at (8,2) {} edge (11) edge (5) edge (4);
        \node[vertex] (13) at (10,2) {} edge (12) edge (6) edge (5);
        \node[vertex] (14) at (12,2) {} edge (13) edge (6);
        
        \node[vertex] (16) at (2,0) {} edge (9) edge (8);
        \node[vertex] (17) at (4,0) {} edge (16) edge (10) edge (9);
        \node[vertex,fill=blue!30] (18) at (6,0) {} edge (17) edge (11) edge (10);
        \node[vertex,fill=blue!30] (19) at (8,0) {} edge (18) edge (12) edge (11);
        \node[vertex] (20) at (10,0) {} edge (19) edge (13) edge (12);
        \node[vertex] (21) at (12,0) {} edge (20) edge (13) edge (14);
        \draw[orange,line width=2pt] (5,4) -- (11);
        \draw[orange,line width=2pt] (7,0) -- (11);
        \node at (-1,2) {$\cdots$};
        \node at (13,2) {$\cdots$};
        \node at (14,2) {$=$};
        \begin{scope}[shift={(16,0)}]
            \node at (-1,2) {$\cdots$};
            \node at (13,2) {$\cdots$}; 
            \node[vertex] (1) at (1,4) {};
            \node[vertex] (2) at (3,4) {} edge (1);
            \node[vertex,fill=blue!30] (3) at (5,4) {} edge (2);
            \node[vertex,fill=blue!30] (4) at (7,4) {} edge (3);
            \node[vertex] (5) at (9,4) {} edge (4);
            \node[vertex] (6) at (11,4) {} edge (5);
            
            \node[vertex] (8) at (0,2) {} edge (1);
            \node[vertex] (9) at (2,2) {} edge (1) edge (2) edge (8);
            \node[vertex,fill=blue!30] (10) at (4,2) {} edge (9) edge (3) edge (2);
            \node[vertex,draw=orange!120,line width=1pt,fill=blue] (11) at (6,2) {} edge (10) edge (4) edge (3);
            \node[vertex,fill=blue!30] (12) at (8,2) {} edge (11) edge (5) edge (4);
            \node[vertex] (13) at (10,2) {} edge (12) edge (6) edge (5);
            \node[vertex] (14) at (12,2) {} edge (13) edge (6);
            
            \node[vertex] (16) at (1,0) {} edge (9) edge (8);
            \node[vertex] (17) at (3,0) {} edge (16) edge (10) edge (9);
            \node[vertex,fill=blue!30] (18) at (5,0) {} edge (17) edge (11) edge (10);
            \node[vertex,fill=blue!30] (19) at (7,0) {} edge (18) edge (12) edge (11);
            \node[vertex] (20) at (9,0) {} edge (19) edge (13) edge (12);
            \node[vertex] (21) at (11,0) {} edge (20) edge (13) edge (14);
            \draw[orange,line width=2pt] (6,4) -- (11);
            \draw[orange,line width=2pt] (6,0) -- (11);
        \end{scope}
    \end{tikzpicture}
    \caption{First player move on $\bothminusone{3}$ when $n$ is odd, and its line of symmetry}
    \label{fig:P3_cen_minus_odd}
    \end{figure}\
    
    Then let us consider the game \snort played on the empty graph $\bothminusone{3}$ and $n$ is even, as shown in \cref{fig:P3_cen_minus_even}.
    The optimal first move for Left in this situation is to move in $\left(n/2+1,2\right)$. This allows for Left to think of the graph as if it were split into two equal graphs aside from some shading by the one hand tied principle. This can be done by deleting the vertices:
    \begin{align*}
        \left(\frac{n}{2},1\right), \left(\frac{n}{2},2\right), \left(\frac{n}{2}+1,2\right), \text{ and } \left(\frac{n}{2}+1,3\right).
    \end{align*}
    
    \begin{figure}[!ht]
    \centering
    \begin{tikzpicture}[scale=.4, vertex/.style={circle, draw, minimum size=2.5mm, font=\scriptsize}]
        
        \node[vertex] (1) at (0,4) {};
        \node[vertex] (2) at (2,4) {} edge (1);
        \node[vertex,draw=orange!120,line width=1pt,fill=blue!30] (3) at (4,4) {} edge (2);
        \node[vertex,fill=blue!30] (4) at (6,4) {} edge (3);
        \node[vertex] (5) at (8,4) {} edge (4);
        \node[vertex] (6) at (10,4) {} edge (5);
        \node[vertex] (1a) at (-2,4) {} edge (1);
        
        \node[vertex] (8a) at (-2,2) {} edge (1a);
        \node[vertex] (8) at (0,2) {} edge (1) edge (1a) edge (8a);
        \node[vertex] (9) at (2,2) {} edge (8) edge (1) edge (2);
        \node[vertex,draw=orange!120,line width=1pt,fill=blue!30] (10) at (4,2) {} edge (9) edge[orange,line width=2pt] (3) edge (2);
        \node[vertex,draw=orange!120,line width=1pt,fill=blue] (11) at (6,2) {} edge[orange,line width=2pt] (10) edge[orange,line width=2pt] (3) edge (4);
        \node[vertex,fill=blue!30] (12) at (8,2) {} edge (11) edge (5) edge (4);
        \node[vertex] (13) at (10,2) {} edge (12) edge (6) edge (5);
        \node[vertex] (13a) at (12,2) {} edge (13) edge (6);
        
        \node[vertex] (15) at (0,0) {} edge (8) edge (8a);
        \node[vertex] (16) at (2,0) {} edge (15) edge (8) edge (9);
        \node[vertex] (17) at (4,0) {} edge (16) edge (10) edge (9);
        \node[vertex,draw=orange!120,line width=1pt,fill=blue!30] (18) at (6,0) {} edge (17) edge[orange,line width=2pt] (11) edge[orange,line width=2pt] (10);
        \node[vertex,fill=blue!30] (19) at (8,0) {} edge (18) edge (12) edge (11);
        \node[vertex] (20) at (10,0) {} edge (19) edge (13) edge (12);
        \node[vertex] (21) at (12,0) {} edge (20) edge (13) edge (13a);
        \node at (-3,2) {$\cdots$};
        \node at (13,2) {$\cdots$};
        \node at (14,2) {$=$};
        \begin{scope}[shift={(18,0)}]
            \node at (-3,2) {$\cdots$};
            \node at (13,2) {$\cdots$};  
            \node[vertex] (1) at (1,4) {};
            \node[vertex] (2) at (3,4) {} edge (1);
            \node[vertex,draw=orange!120,line width=1pt,fill=blue!30] (3) at (5,4) {} edge (2);
            \node[vertex,fill=blue!30] (4) at (7,4) {} edge (3);
            \node[vertex] (5) at (9,4) {} edge (4);
            \node[vertex] (6) at (11,4) {} edge (5);
            \node[vertex] (1a) at (-1,4) {} edge (1);
            
            \node[vertex] (8a) at (-2,2) {} edge (1a);
            \node[vertex] (8) at (0,2) {} edge (1) edge (1a) edge (8a);
            \node[vertex] (9) at (2,2) {} edge (8) edge (1) edge (2);
            \node[vertex,draw=orange!120,line width=1pt,fill=blue!30] (10) at (4,2) {} edge (9) edge[orange,line width=2pt] (3) edge (2);
            \node[vertex,draw=orange!120,line width=1pt,fill=blue] (11) at (6,2) {} edge[orange,line width=2pt] (10) edge (4) edge[orange,line width=2pt] (3);
            \node[vertex,fill=blue!30] (12) at (8,2) {} edge (11) edge (5) edge (4);
            \node[vertex] (13) at (10,2) {} edge (12) edge (6) edge (5);
            \node[vertex] (13a) at (12,2) {} edge (13) edge (6);
            
            \node[vertex] (15) at (-1,0) {} edge (8) edge (8a);
            \node[vertex] (16) at (1,0) {} edge (15) edge (8) edge (9);
            \node[vertex] (17) at (3,0) {} edge (16) edge (10) edge (9);
            \node[vertex,draw=orange!120,line width=1pt,fill=blue!30] (18) at (5,0) {} edge (17) edge[orange,line width=2pt] (11) edge[orange,line width=2pt] (10);
            \node[vertex,fill=blue!30] (19) at (7,0) {} edge (18) edge (12) edge (11);
            \node[vertex] (20) at (9,0) {} edge (19) edge (13) edge (12);
            \node[vertex] (21) at (11,0) {} edge (20) edge (13) edge (13a);
        \end{scope}
    \end{tikzpicture}
    \caption{First player move on $\bothminusone{3}$ when $n$ is even, and its line of symmetry}
    \label{fig:P3_cen_minus_even}
    \end{figure}

    By moving first in these positions it will allow for Left to think as if the graph is split into two graphs that are rotational symmetric aside from some shading, so using \cref{thm:symmetry}, Left will play last and win making it a first player win.
\end{proof}

\vspace{5pt}

\begin{lemma}\label{lem:oneslant}
    When playing \snort on empty graphs of the form $\oneslant{3}$, the outcome class is a  first player win.
\end{lemma}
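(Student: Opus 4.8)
The plan is to follow the template already used in \cref{lem:basegraph} and the subsequent variants of $\basegraph{3}$: check the finitely many small values of $n$ by hand or with CGSuite, and for $n$ past a small threshold assume without loss of generality that Left moves first, have her play a single well-chosen central vertex, invoke the one-hand-tied principle so that she may ignore a prescribed set of her own tinted vertices (and, where needed, a few incident edges), and arrange that what remains is a disjoint union of two pieces that are isomorphic up to some blue tinting. Then \cref{thm:symmetry} finishes the argument via the copycat strategy, giving a first-player win. As in the earlier lemmas the argument will split on the parity of $n$, and I expect Left's first move to be $\left(\lceil n/2\rceil,2\right)$ when $n$ is odd and $\left(n/2+1,2\right)$ when $n$ is even, with an accompanying figure recording the exact vertices and edges Left discards and the resulting line of symmetry.

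The substance of the proof is geometric. The board $\oneslant{3}$ is the triangular grid whose three rows occupy columns $1,\dots,n$, then $1,\dots,n+1$ (with $R_2$ the additional column-$(n+1)$ vertex), then $1,\dots,n+2$ (with $R_3$ and $R_3'$ the additional vertices): it is flat along its left edge but ``steps out'' by one extra vertex in each successive row on the right. A straight vertical cut at the centre would leave one rectangular piece and one stepped piece, which are not isomorphic, so the split must be arranged more carefully. Just as in the $\oneslant{2}$ cases of \cref{thm:1row}, the fix is to have Left also ignore a row-$1$ vertex (and, in one parity, a neighbouring horizontal edge) near the cut, which shortens one half's top row by exactly the right amount so that both halves become flat-on-one-side, stepped-on-the-other strips of the same dimensions — with $R_2,R_3,R_3'$ playing in the right half precisely the structural role that the flat left edge plays in the left half.

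The main obstacle will be pinning down, separately for odd and even $n$, exactly which tinted vertices and edges adjacent to Left's first move she should discard so that the two resulting components are genuinely isomorphic (not merely of equal order), and then verifying that isomorphism — in particular locating $R_2,R_3,R_3'$ correctly within the right half. Once the correct split is in hand, legality of Left's first move and of all copycat responses is immediate, \cref{thm:symmetry} applies to the position after Left's first move, and we conclude that \snort on $\oneslant{3}$ is a first player win; the small-$n$ base cases are routine and parallel those in the preceding lemmas.
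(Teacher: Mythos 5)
Your plan is the paper's plan: the same first moves ($\left(\lceil n/2\rceil,2\right)$ for odd $n$, $\left(n/2+1,2\right)$ for even $n$), the same machinery (one-hand-tied principle, splitting into two components isomorphic up to blue tinting, then \cref{thm:symmetry} and copycat), the same parity case split, and small $n$ checked directly. However, the one piece of genuine content in this lemma beyond the template is precisely the step you defer as ``the main obstacle'': exhibiting the split. Your tentative description --- ignore a row-$1$ vertex near the cut and, in one parity, a neighbouring horizontal edge --- is not a split that works (it does not even disconnect the graph, let alone leave isomorphic pieces). What actually works, and what the paper does, is: for odd $n$, Left ignores the three blue-tinted vertices $\left(\lceil n/2\rceil,1\right)$, $\left(\lceil n/2\rceil+1,2\right)$, and $\left(\lceil n/2\rceil+1,3\right)$, so that together with her move $\left(\lceil n/2\rceil,2\right)$ the discarded set is a diagonal staircase, one vertex per row, running parallel to the slanted right end of $\oneslant{3}$; for even $n$, she ignores no vertices beyond her move $\left(n/2+1,2\right)$ but instead the two horizontal edges $\left(\left(n/2,1\right),\left(n/2+1,1\right)\right)$ and $\left(\left(n/2+1,3\right),\left(n/2+2,3\right)\right)$. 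In both parities one must then verify the isomorphism of the two components (it is a combinatorial mirror, swapping vertical and diagonal edges), which is the verification your proposal acknowledges but does not carry out.

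A second point you should make explicit once you write the split down: ``ignoring an edge'' is not literally an application of the one-hand-tied principle. Deleting a vertex that only Left could use merely gives up options of hers, but deleting an edge \emph{enlarges} the set of legal moves for both players, so the imagined game is not a restriction of the real one. The step is sound here only because, after Left's first move, both endpoints of each ignored edge are tinted blue; such an edge can never forbid a move by either player (Right can never occupy either endpoint), so removing it does not change the game. This is also what rescues the copycat: in the even case the isomorphism between the two halves pairs the endpoints of each ignored edge with one another, so without the tinting observation Left's copy of a Right move at one endpoint would be an illegal move at the other.
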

\begin{proof}
    When $n=1$, the first player will play in the vertex $(1,2)$. The opponent then has only one move (in $R_3'$), which the first player can respond to in $(1,1)$ and wins.
    
    For graphs where $n\geq2$, assume without loss of generality that Left plays first.

    First let us consider the game \snort played on the empty graph $\oneslant{3}$ and $n\geq 3$ is odd, as shown in \cref{fig:trap_cen_even}. 
    The optimal first move for Left in this situation is to move in $\left(\lceil n/2\rceil,2\right)$. This allows for Left to think of the graph as if it were split into two isomorphic graphs aside from some shading by the one hand tied principle. This can be done by deleting the vertices
    \begin{align*}
        \left(\frac{n}{2},1\right), \left(\frac{n}{2},2\right), \left(\frac{n}{2}+1,2\right), \text{ and } \left(\frac{n}{2}+1,3\right).
    \end{align*}
    
    \begin{figure}[!ht]
    \centering
    \begin{tikzpicture}[scale=.4, vertex/.style={circle, draw, minimum size=2.5mm, font=\scriptsize}]
        
        \node[vertex] (1) at (0,4) {};
        \node[vertex,fill=blue!30] (2) at (2,4) {} edge (1);
        \node[vertex,draw=orange!120,line width=1pt,fill=blue!30] (3) at (4,4) {} edge (2);
        \node[vertex] (4) at (6,4) {} edge (3);
        \node[vertex] (5) at (8,4) {} edge (4);
        
        \node[vertex] (8) at (0,2) {} edge (1);
        \node[vertex,fill=blue!30] (9) at (2,2) {} edge (8) edge (1) edge (2);
        \node[vertex,draw=orange!120,line width=1pt,fill=blue] (10) at (4,2) {} edge (9) edge[orange,line width=2pt] (3) edge (2);
        \node[vertex,draw=orange!120,line width=1pt,fill=blue!30] (11) at (6,2) {} edge[orange,line width=2pt] (10) edge (4) edge[orange,line width=2pt] (3);
        \node[vertex] (12) at (8,2) {} edge (11) edge (5) edge (4);
        \node[vertex] (13) at (10,2) {} edge (12) edge (5);
        
        \node[vertex] (15) at (0,0) {} edge (8);
        \node[vertex] (16) at (2,0) {} edge (15) edge (8) edge (9);
        \node[vertex,fill=blue!30] (17) at (4,0) {} edge (16) edge (10) edge (9);
        \node[vertex,draw=orange!120,line width=1pt,fill=blue!30] (18) at (6,0) {} edge (17) edge[orange,line width=2pt] (11) edge[orange,line width=2pt] (10);
        \node[vertex] (19) at (8,0) {} edge (18) edge (12) edge (11);
        \node[vertex] (20) at (10,0) {} edge (19) edge (13) edge (12);
        \node[vertex] (21) at (12,0) {} edge (20) edge (13);
        \node at (-1,2) {$\cdots$};
        \node at (12,2) {$\cdots$};
        \node at (13,2) {$=$};
        \begin{scope}[shift={(14.5,0)}]
            \node at (-.5,2) {$\cdots$};
            \node at (12.5,2) {$\cdots$}; 
            \node[vertex] (1) at (2,4) {};
            \node[vertex,fill=blue!30] (2) at (4,4) {} edge (1);
            \node[vertex,draw=orange!120,line width=1pt,fill=blue!30] (3) at (6,4) {} edge (2);
            \node[vertex] (4) at (8,4) {} edge (3);
            \node[vertex] (5) at (10,4) {} edge (4);
            
            \node[vertex] (8) at (1,2) {} edge (1);
            \node[vertex,fill=blue!30] (9) at (3,2) {} edge (8) edge (1) edge (2);
            \node[vertex,draw=orange!120,line width=1pt,fill=blue] (10) at (5,2) {} edge (9) edge[orange,line width=2pt] (3) edge (2);
            \node[vertex,draw=orange!120,line width=1pt,fill=blue!30] (11) at (7,2) {} edge[orange,line width=2pt] (10) edge (4) edge[orange,line width=2pt] (3);
            \node[vertex] (12) at (9,2) {} edge (11) edge (5) edge (4);
            \node[vertex] (13) at (11,2) {} edge (12) edge (5);
            
            \node[vertex] (15) at (0,0) {} edge (8);   
            \node[vertex] (16) at (2,0) {} edge (15) edge (8) edge (9);
            \node[vertex,fill=blue!30] (17) at (4,0) {} edge (16) edge (10) edge (9);
            \node[vertex,draw=orange!120,line width=1pt,fill=blue!30] (18) at (6,0) {} edge (17) edge[orange,line width=2pt] (11) edge[orange,line width=2pt] (10);
            \node[vertex] (19) at (8,0) {} edge (18) edge (12) edge (11);
            \node[vertex] (20) at (10,0) {} edge (19) edge (13) edge (12);
            \node[vertex] (21) at (12,0) {} edge (20) edge (13);
        \end{scope}
    \end{tikzpicture}
    \caption{First player move on $\oneslant{3}$ when $n$ is odd, and its line of symmetry} 
    \label{fig:trap_cen_even}
    \end{figure}

    \vspace{5pt}
    
    Then let us consider the game \snort played on the empty graph $\oneslant{3}$ and $n\geq 2$ is even, as shown in \cref{fig:trap_cen_odd}.
    The optimal first move for Left in this situation is to move in $\left(n/2+1,2\right)$. This allows for Left to think of the graph as if it were split into two isomorphic graphs aside from some shading by the one hand tied principle. This can be done by deleting the vertex $\left(n/2+1,2\right)$ and the edges $\left(\left(n/2,1\right), \left(n/2+1,1\right)\right)$ and $\left(\left(n/2+1,3\right), \left(n/2+2,3\right)\right)$.
    
    \begin{figure}[!ht]
    \centering
    \begin{tikzpicture}[scale=.4, vertex/.style={circle, draw, minimum size=2.5mm, font=\scriptsize}]
        
        \node[vertex] (a) at (-2,4) {};
        \node[vertex] (b) at (-4,4) {} edge (a);
        \node[vertex] (c) at (-2,2) {} edge (a) edge (b);
        \node[vertex] (d) at (-4,2) {} edge (b) edge (c);
        \node[vertex] (e) at (-2,0) {} edge (c) edge (d);
        \node[vertex] (f) at (-4,0) {} edge (d) edge (e);
        
        \node[vertex,fill=blue!30] (1) at (0,4) {} edge (a);
        \node[vertex,fill=blue!30] (2) at (2,4) {} edge (1);
        \node[vertex] (3) at (4,4) {} edge (2);
        \node[vertex] (4) at (6,4) {} edge (3);
        
        \node[vertex,fill=blue!30] (8) at (0,2) {} edge (1) edge (c) edge (a);
        \node[vertex,draw=orange!120,line width=1pt,fill=blue] (9) at (2,2) {} edge (8) edge (1) edge (2);
        \node[vertex,fill=blue!30] (10) at (4,2) {} edge (9) edge (3) edge (2);
        \node[vertex] (11) at (6,2) {} edge (10) edge (4) edge (3);
        \node[vertex] (12) at (8,2) {} edge (11) edge (4);
        
        \node[vertex] (15) at (0,0) {} edge (8) edge (e) edge (c);
        \node[vertex,fill=blue!30] (16) at (2,0) {} edge (15) edge (8) edge (9);
        \node[vertex,fill=blue!30] (17) at (4,0) {} edge (16) edge (10) edge (9);
        \node[vertex] (18) at (6,0) {} edge (17) edge (11) edge (10);
        \node[vertex] (19) at (8,0) {} edge (18) edge (12) edge (11);
        \node[vertex] (20) at (10,0) {} edge (19) edge (12);
        \draw[orange,line width=2pt] (1,4) -- (9) -- (3,0);
        \node at (-5,2) {$\cdots$};
        \node at (10,2) {$\cdots$};
        \node at (11,2) {$=$};
        \begin{scope}[shift={(18.5,0)}]
            \node at (-6.5,2) {$\cdots$};
            \node at (8.5,2) {$\cdots$}; 
            \node[vertex] (a) at (-2,4) {};
            \node[vertex] (b) at (-4,4) {} edge (a);
            \node[vertex] (c) at (-3,2) {} edge (a) edge (b);
            \node[vertex] (d) at (-5,2) {} edge (b) edge (c);
            \node[vertex] (e) at (-4,0) {} edge (c) edge (d);
            \node[vertex] (f) at (-6,0) {} edge (d) edge (e);
            
            \node[vertex,fill=blue!30] (1) at (0,4) {} edge (a);
            \node[vertex,fill=blue!30] (2) at (2,4) {} edge (1);
            \node[vertex] (3) at (4,4) {} edge (2);
            \node[vertex] (4) at (6,4) {} edge (3);
            
            \node[vertex,fill=blue!30] (8) at (-1,2) {} edge (1) edge (c) edge (a);
            \node[vertex,draw=orange!120,line width=1pt,fill=blue] (9) at (1,2) {} edge (8) edge (1) edge (2);
            \node[vertex,fill=blue!30] (10) at (3,2) {} edge (9) edge (3) edge (2);
            \node[vertex] (11) at (5,2) {} edge (10) edge (4) edge (3);
            \node[vertex] (12) at (7,2) {} edge (11) edge (4);
            
            \node[vertex] (15) at (-2,0) {} edge (8) edge (e) edge (c);
            \node[vertex,fill=blue!30] (16) at (0,0) {} edge (15) edge (8) edge (9);
            \node[vertex,fill=blue!30] (17) at (2,0) {} edge (16) edge (10) edge (9);
            \node[vertex] (18) at (4,0) {} edge (17) edge (11) edge (10);
            \node[vertex] (19) at (6,0) {} edge (18) edge (12) edge (11);
            \node[vertex] (20) at (8,0) {} edge (19) edge (12);
            \draw[orange,line width=2pt] (1,4) -- (9) -- (1,0);
        \end{scope}
    \end{tikzpicture}
    \caption{First player move on $\oneslant{3}$ when $n$ is even, and its line of symmetry}
    \label{fig:trap_cen_odd}
    \end{figure}

    By making this first move and the given split, by \cref{thm:symmetry} Left wins.
\end{proof}
    
\vspace{5pt}

\begin{lemma}\label{lem:rightaddonly}
    When playing \snort on empty graphs of the form $\rightaddonly{3}$, the outcome class is a first player win.
\end{lemma}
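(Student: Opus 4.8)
The plan is to run the same argument as in \cref{lem:basegraph,lem:leftaddoneboth,lem:bothaddone,lem:bothminusone,lem:oneslant}. Assume without loss of generality that Left moves first. The values $n\le 2$ are checked directly or with software such as CGSuite, and for $n\ge 3$ Left makes one carefully chosen near-central move; the one-hand-tied principle then lets her treat the resulting position as a disjoint union of two components that are isomorphic up to some blue tinting, so \cref{thm:symmetry} gives her the win by copycat. As before, the description of the first move and of the vertices and edges Left ignores is split according to the parity of $n$.

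The difficulty specific to $\rightaddonly{3}$ is the vertex $R_3$: together with $(n,2)$ and $(n,3)$ it forms one extra triangle at the bottom right, and since $\basegraph{3}$ is symmetric only under a $180^\circ$ rotation (which preserves the diagonals) and not under a reflection, this triangle has no counterpart on the left. Consequently the balanced column-split used for $\basegraph{3}$ no longer yields two isomorphic halves. For $n\ge 3$ odd I expect the move $\left(\lceil n/2\rceil,2\right)$ still to work, with one adjustment: besides ignoring her move vertex and the tinted vertex $\left(\lceil n/2\rceil,1\right)$, Left keeps the (also tinted) vertex $\left(\lceil n/2\rceil,3\right)$ and in addition ignores the single edge between $\left(\lceil n/2\rceil,3\right)$ and $\left(\lceil n/2\rceil+1,3\right)$. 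The left component is then the columns $1,\dots,\lceil n/2\rceil-1$ together with the dangling corner vertex $\left(\lceil n/2\rceil,3\right)$, the right component is the columns $\lceil n/2\rceil+1,\dots,n$ together with $R_3$, and the two are isomorphic by the translation that shifts the left component $\lceil n/2\rceil$ columns to the right and sends $\left(\lceil n/2\rceil,3\right)$ to $R_3$. Restoring the ignored edge is harmless, since both of its endpoints are adjacent to Left's first move and hence blue-tinted, so Right never colours either of them; this is exactly the situation in which the one-hand-tied principle lets Left pretend the edge is absent.

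For $n$ even a plain horizontal-strip split cannot succeed, essentially for a counting reason: a copy of $T_{k,3}$ contributes a multiple of $3$ vertices while each extra ``corner triangle'' contributes one more, so the two components must carry the same number of corner triangles; but the only corner-triangle positions available in $\rightaddonly{3}$ are the bottom-right one holding $R_3$ and the one that appears at a cut end, and $\rightaddonly{3}$ has no $L_1$-type vertex with which to give the left component a matching second corner triangle. So the even case needs a genuinely different split --- a staircase cut along the middle, or a first move that neutralises the $R_3$ triangle before the split takes effect --- and pinning this down is the main obstacle. Once the first move and the ignored vertices and edges are fixed in each parity, the remaining work is routine: Left's first move is legal because the board is empty, the claimed isomorphism of the two components is a finite check on the picture, restoring each ignored edge cannot spoil a copycat response because its endpoints are blue-tinted, and \cref{thm:symmetry} then yields the first-player win; one should also confirm that the small values of $n$, where this pattern has not yet settled, are covered by the direct check.
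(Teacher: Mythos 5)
Your odd case coincides with the paper's: the move $\left(\lceil n/2\rceil,2\right)$, ignoring that vertex and $\left(\lceil n/2\rceil,1\right)$ together with the edge $\left(\left(\lceil n/2\rceil,3\right),\left(\lceil n/2\rceil+1,3\right)\right)$, so that the dangling vertex $\left(\lceil n/2\rceil,3\right)$ mirrors $R_3$, is exactly the split used in the paper, and your justification for discarding the extra edge (both endpoints blue-tinted, so it can never constrain anyone) is a legitimate reading of the one-hand-tied principle.

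The even case, however, is a genuine gap: you explicitly leave it unresolved (``pinning this down is the main obstacle''), whereas the paper does resolve it, and with the same device you already used for $n$ odd. For $n\geq 4$ even, Left plays $\left(n/2+1,2\right)$ and ignores the five vertices $\left(n/2,1\right)$, $\left(n/2,2\right)$, $\left(n/2+1,1\right)$, $\left(n/2+1,2\right)$, $\left(n/2+1,3\right)$ (her move plus four of its blue-tinted neighbours). What remains splits cleanly, with no edge deletion needed, into columns $1,\dots,n/2-1$ plus the dangling vertex $\left(n/2,3\right)$ on one side, and columns $n/2+2,\dots,n$ plus $R_3$ on the other; each is a copy of $T_{n/2-1,3}$ with one corner triangle attached at rows $2,3$ of its last column, so \cref{thm:symmetry} applies. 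Your counting objection only rules out cuts that delete whole columns; it does not exclude precisely this staggered deletion, which manufactures the ``matching second corner triangle'' out of $\left(n/2,3\right)$ rather than out of a nonexistent $L_1$. You should also note that the paper treats $n=2$ by a short explicit line of play (first player takes $(2,2)$, the opponent's only reply is $(1,3)$), consistent with your proposal to check small $n$ directly.
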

\begin{proof}
    When $n=1$, the first player will play in the central vertex and prevent their opponent from moving. When $n=2$, the first player still plays in the central vertex $(2,2)$, after which the opponent can only respond in $(1,3)$, and the first player has several more moves and wins.
    
    When $n\geq3$, assume without loss of generality that Left is first to move.

    Consider the case when $n\geq 3$ is odd, as shown in \cref{fig:lat_long_even}.
    The optimal first move for Left in this situation is to move in $\left(\lceil n/2\rceil,2\right)$. Left then thinks of the graph as split by deleting the vertices $\left(\lceil n/2\rceil,2\right)$ and $\left(\lceil n/2\rceil,1\right)$ and the edge $\left(\left(\lceil n/2\rceil,3\right), \left(\lceil n/2\rceil+1,3\right)\right)$. 
    
    \begin{figure}[!ht]
    \centering
    \begin{tikzpicture}[scale=.4, vertex/.style={circle, draw, minimum size=2.5mm, font=\scriptsize}]
        
        \node[vertex] (a) at (0,4) {};
        \node[vertex] (b) at (-2,4) {} edge (a);
        \node[vertex] (c) at (0,2) {} edge (a) edge (b);
        \node[vertex] (d) at (-2,2) {} edge (b) edge (c);
        \node[vertex] (e) at (0,0) {} edge (c) edge (d);
        \node[vertex] (f) at (-2,0) {} edge (d) edge (e);
        
        \node[vertex,fill=blue!30] (2) at (2,4) {} edge (a);
        \node[vertex,draw=orange!120,line width=1pt,fill=blue!30] (3) at (4,4) {} edge (2);
        \node[vertex] (4) at (6,4) {} edge (3);
        \node[vertex] (5) at (8,4) {} edge (4);
        \node[vertex] (6) at (10,4) {} edge (5);
        
        \node[vertex,fill=blue!30] (9) at (2,2) {} edge (2) edge (a) edge (c);
        \node[vertex,draw=orange!120,line width=1pt,fill=blue] (10) at (4,2) {} edge (9) edge[orange,line width=2pt] (3) edge (2);
        \node[vertex,fill=blue!30] (11) at (6,2) {} edge (10) edge (4) edge (3);
        \node[vertex,fill=blue!30] (12) at (8,2) {} edge (11) edge (5) edge (4);
        \node[vertex] (13) at (10,2) {} edge (12) edge (6) edge (5);
        
        \node[vertex] (16) at (2,0) {} edge (9) edge (c) edge (e);
        \node[vertex,fill=blue!30] (17) at (4,0) {} edge (16) edge (10) edge (9);
        \node[vertex,fill=blue!30] (18) at (6,0) {} edge (17) edge (11) edge (10);
        \node[vertex] (19) at (8,0) {} edge (18) edge (12) edge (11);
        \node[vertex] (20) at (10,0) {} edge (19) edge (13) edge (12);
        \node[vertex] (21) at (12,0) {} edge (20) edge (13);
        \draw[orange,line width=2pt] (10) -- (5,0);
        \node at (-3,2) {$\cdots$};
        \node at (12,2) {$\cdots$};
        \node at (13,2) {$=$};
        \begin{scope}[shift={(18.5,0)}]
            \node at (-4.5,2) {$\cdots$};
            \node at (10,2) {$\cdots$}; 
            
            \node[vertex] (a) at (0,4) {};
            \node[vertex] (b) at (-2,4) {} edge (a);
            \node[vertex] (c) at (-1,2) {} edge (a) edge (b);
            \node[vertex] (d) at (-3,2) {} edge (b) edge (c);
            \node[vertex] (e) at (-2,0) {} edge (c) edge (d);
            \node[vertex] (f) at (-4,0) {} edge (d) edge (e);
            
            \node[vertex,fill=blue!30] (2) at (2,4) {} edge (a);
            \node[vertex,draw=orange!120,line width=1pt,fill=blue!30] (3) at (4,4) {} edge (2);
            \node[vertex] (4) at (6,4) {} edge (3);
            \node[vertex] (5) at (8,4) {} edge (4);
            \node[vertex] (6) at (10,4) {} edge (5);
            
            \node[vertex,fill=blue!30] (9) at (1,2) {} edge (2) edge (a) edge (c);
            \node[vertex,draw=orange!120,line width=1pt,fill=blue] (10) at (3,2) {} edge (9) edge[orange,line width=2pt] (3) edge (2);
            \node[vertex,fill=blue!30] (11) at (5,2) {} edge (10) edge (4) edge (3);
            \node[vertex,fill=blue!30] (12) at (7,2) {} edge (11) edge (5) edge (4);
            \node[vertex] (13) at (9,2) {} edge (12) edge (6) edge (5);
            
            \node[vertex] (16) at (0,0) {} edge (9) edge (c) edge (e);
            \node[vertex,fill=blue!30] (17) at (2,0) {} edge (16) edge (10) edge (9);
            \node[vertex,fill=blue!30] (18) at (4,0) {} edge (17) edge (11) edge (10);
            \node[vertex] (19) at (6,0) {} edge (18) edge (12) edge (11);
            \node[vertex] (20) at (8,0) {} edge (19) edge (13) edge (12);
            \node[vertex] (21) at (10,0) {} edge (20) edge (13);
            \draw[orange,line width=2pt] (10) -- (3,0);
            \end{scope}
    \end{tikzpicture}
    \caption{First player move on $\rightaddonly{3}$ when $n$ is odd, and its line of symmetry}
    \label{fig:lat_long_even}
    \end{figure}

    When $n\geq 4$ is even, as shown in \cref{fig:lat_long_odd}, the optimal first move for Left is to go in $\left(n/2+1,2\right)$. Left thinks of the graphs as split by deleting the vertices
    \begin{align*}
        \left(\frac{n}{2},1\right), \left(\frac{n}{2},2\right), \left(\frac{n}{2}+1,1\right), \left(\frac{n}{2}+1,2\right), \text{ and } \left(\frac{n}{2}+1,3\right).
    \end{align*}
    
    \begin{figure}[!ht]
    \centering
    \begin{tikzpicture}[scale=.4, vertex/.style={circle, draw, minimum size=2.5mm, font=\scriptsize}]
        
        \node[vertex] (1) at (0,4) {};
        \node[vertex] (2) at (2,4) {} edge (1);
        \node[vertex,draw=orange!120,line width=1pt,fill=blue!30] (3) at (4,4) {} edge (2);
        \node[vertex,draw=orange!120,line width=1pt,fill=blue!30] (4) at (6,4) {} edge[orange,line width=2pt] (3);
        \node[vertex] (5) at (8,4) {} edge (4);
        \node[vertex] (6) at (10,4) {} edge (5);
        
        \node[vertex] (8) at (0,2) {} edge (1);
        \node[vertex] (9) at (2,2) {} edge (8) edge (1) edge (2);
        \node[vertex,draw=orange!120,line width=1pt,fill=blue!30] (10) at (4,2) {} edge (9) edge[orange,line width=2pt] (3) edge (2);
        \node[vertex,draw=orange!120,line width=1pt,fill=blue] (11) at (6,2) {} edge[orange,line width=2pt] (10) edge[orange,line width=2pt] (4) edge[orange,line width=2pt] (3);
        \node[vertex,fill=blue!30] (12) at (8,2) {} edge (11) edge (5) edge (4);
        \node[vertex] (13) at (10,2) {} edge (12) edge (6) edge (5);
        
        \node[vertex] (15) at (0,0) {} edge (8);
        \node[vertex] (16) at (2,0) {} edge (15) edge (8) edge (9);
        \node[vertex] (17) at (4,0) {} edge (16) edge (10) edge (9);
        \node[vertex,draw=orange!120,line width=1pt,fill=blue!30] (18) at (6,0) {} edge (17) edge[orange,line width=2pt] (11) edge[orange,line width=2pt] (10);
        \node[vertex,fill=blue!30] (19) at (8,0) {} edge (18) edge (12) edge (11);
        \node[vertex] (20) at (10,0) {} edge (19) edge (13) edge (12);
        \node[vertex] (21) at (12,0) {} edge (20) edge (13);
        \node at (-1.5,2) {$\cdots$};
        \node at (12,2) {$\cdots$};
        \node at (13,2) {$=$};
        \begin{scope}[shift={(15.5,0)}]
            \node at (-1.5,2) {$\cdots$};
            \node at (11.5,2) {$\cdots$};
            
            \node[vertex] (1) at (1,4) {};
            \node[vertex] (2) at (3,4) {} edge (1);
            \node[vertex,draw=orange!120,line width=1pt,fill=blue!30] (3) at (5,4) {} edge (2);
            \node[vertex,draw=orange!120,line width=1pt,fill=blue!30] (4) at (7,4) {} edge[orange,line width=2pt] (3);
            \node[vertex] (5) at (9,4) {} edge (4);
            \node[vertex] (6) at (11,4) {} edge (5);
            
            \node[vertex] (8) at (0,2) {} edge (1);
            \node[vertex] (9) at (2,2) {} edge (8) edge (1) edge (2);
            \node[vertex,draw=orange!120,line width=1pt,fill=blue!30] (10) at (4,2) {} edge (9) edge[orange,line width=2pt] (3) edge (2);
            \node[vertex,draw=orange!120,line width=1pt,fill=blue] (11) at (6,2) {} edge[orange,line width=2pt] (10) edge[orange,line width=2pt] (4) edge[orange,line width=2pt] (3);
            \node[vertex,fill=blue!30] (12) at (8,2) {} edge (11) edge (5) edge (4);
            \node[vertex] (13) at (10,2) {} edge (12) edge (6) edge (5);
            
            \node[vertex] (15) at (-1,0) {} edge (8);
            \node[vertex] (16) at (1,0) {} edge (15) edge (8) edge (9);
            \node[vertex] (17) at (3,0) {} edge (16) edge (10) edge (9);
            \node[vertex,draw=orange!120,line width=1pt,fill=blue!30] (18) at (5,0) {} edge (17) edge[orange,line width=2pt] (11) edge[orange,line width=2pt] (10);
            \node[vertex,fill=blue!30] (19) at (7,0) {} edge (18) edge (12) edge (11);
            \node[vertex] (20) at (9,0) {} edge (19) edge (13) edge (12);
            \node[vertex] (21) at (11,0) {} edge (20) edge (13);
        \end{scope}
    \end{tikzpicture}
    \caption{First player move on $\rightaddonly{3}$ when $n$ is even, and its line of symmetry}
    \label{fig:lat_long_odd}
    \end{figure}

    Left then wins in both cases by \cref{thm:symmetry}.
\end{proof}

For the last variant, we will only be considering the case that $n$ is even. Although computational evidence indicates that the case when $n$ is odd is also a first player win, a similar copycat strategy cannot be used. The first player would have to make two moves at the start of the game to be able to split the board into two isomorphic, aside from shading, components. Instead, the optimal moves for the first player were to play greedy, i.e.\ reserve as many vertices in each move as possible. Even for small $n$, proving that this guarantees they can win has to consider many different cases and is tedious.

\begin{lemma}\label{lem:rightminusonly}
    When playing \snort on empty graphs of the form $\rightminusonly{3}$ where $n$ is even, the outcome class is a first player win.
\end{lemma}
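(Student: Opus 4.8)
The plan is to follow the template of the preceding lemmas: have Left (the first player) open with a single move that, via the one-hand-tied principle, splits the board into two components that are isomorphic up to blue tinting, and then invoke \cref{thm:symmetry} so Left wins as the second player by copying Right. Since we only treat $n$ even, the obstruction mentioned before the lemma (needing two opening moves in the odd case) does not arise.

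For $n=2$ I would check $\rightminusonly{3}$ directly, by hand or with CGSuite; the construction below in fact still applies, with the two components degenerating to single edges. So assume $n\ge4$ is even and, without loss of generality, that Left moves first. Left plays $\left(\frac n2+1,2\right)$, whose six blue-tinted neighbours include $\left(\frac n2,1\right)$, $\left(\frac n2+1,1\right)$, and $\left(\frac n2+1,3\right)$; by the one-hand-tied principle Left pretends these three are deleted. Together with the played vertex this removes all of column $\frac n2+1$ and the vertex $\left(\frac n2,1\right)$, which disconnects the rest into the subgraph induced on columns $1,\dots,\frac n2$ minus $\left(\frac n2,1\right)$ and the subgraph induced on columns $\frac n2+2,\dots,n$ together with $R_2$ and $R_3$. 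I would accompany this with the usual pair of pictures (squared and equilateral).

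The key step is to verify that both pieces are isomorphic, each being a copy of $\rightminusonly{3}$ with parameter $\frac n2-1$. For the right piece this is clear after relabelling columns $\frac n2+2,\dots,n$ as $1,\dots,\frac n2-1$: deleting column $\frac n2+1$ leaves column $\frac n2+2$ a clean left end, and $R_2,R_3$ form the fan at the new right end exactly as in the definition. For the left piece, once column $\frac n2+1$ and $\left(\frac n2,1\right)$ are gone the vertices $\left(\frac n2,2\right)$ and $\left(\frac n2,3\right)$ are joined only to $\left(\frac n2-1,1\right),\left(\frac n2-1,2\right)$ and to $\left(\frac n2-1,2\right),\left(\frac n2-1,3\right)$ respectively, and to each other -- which is precisely how $R_2,R_3$ attach to the last column of $T_{n/2-1,3}$, diagonals included. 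Thus both pieces equal $\rightminusonly{3}$ with parameter $\frac n2-1$, differing only in blue tinting (at $\left(\frac n2,2\right)$ on the left, at $\left(\frac n2+2,2\right)$ and $\left(\frac n2+2,3\right)$ on the right). Then \cref{thm:symmetry} gives that Left wins the reduced game by copycat, and since Right can never play on the three blue-tinted vertices Left ignored, Left wins on the full graph; hence $\rightminusonly{3}$ with $n$ even is a first-player win.

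I expect the main obstacle to be pinning down the exact set of vertices Left should ignore so that the two pieces coincide. The asymmetry of $\rightminusonly{3}$ -- a clean left end versus a fan of three triangles on the right -- makes a naive central cut produce pieces of different sizes; the remedy is that the cut must create a fresh fan $\left\{\left(\frac n2,2\right),\left(\frac n2,3\right)\right\}$ on the left piece matching $\left\{R_2,R_3\right\}$, and a vertex count forces ignoring $\left(\frac n2,1\right)$ rather than $\left(\frac n2,2\right)$. Once the ignored set is chosen correctly, checking the isomorphism (in particular that the diagonal edges point the right way) is routine and the conclusion follows from \cref{thm:symmetry}.
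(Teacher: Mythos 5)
Your proposal is correct and follows essentially the same route as the paper: the identical opening move at $\left(\frac n2+1,2\right)$, ignoring the same set of vertices $\left(\frac n2,1\right),\left(\frac n2+1,1\right),\left(\frac n2+1,2\right),\left(\frac n2+1,3\right)$ via the one-hand-tied principle, and concluding with the copycat argument of \cref{thm:symmetry}. Your explicit verification that each component is a copy of $\rightminusonly{3}$ with parameter $\frac n2-1$ (differing only in blue tinting) is a detail the paper conveys only through its figure, but the argument is the same.
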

\begin{proof}
    Assume without loss of generality that Left goes first and $n\geq 2$, see \cref{fig:lat_short_even}. The first move for Left in this situation is to go in $\left(n/2+1,2\right)$, which allows for Left to think of the graph as if it were split into two isomorphic graphs, aside from some shading, by the one hand tied principle. This can be done by deleting the vertices
    \begin{align*}
        \left(\frac{n}{2},1\right), \left(\frac{n}{2}+1,1\right), \left(\frac{n}{2}+1,2\right), \text{ and } \left(\frac{n}{2}+1,3\right).
    \end{align*}
    
    \begin{figure}[!ht]
    \centering
    \begin{tikzpicture}[scale=.4, vertex/.style={circle, draw, minimum size=2.5mm, font=\scriptsize}]
        
        \node[vertex] (1) at (0,4) {};
        \node[vertex] (2) at (2,4) {} edge (1);
        \node[vertex,draw=orange!120,line width=1pt,fill=blue!30] (3) at (4,4) {} edge (2);
        \node[vertex,draw=orange!120,line width=1pt,fill=blue!30] (4) at (6,4) {} edge[orange,line width=2pt] (3);
        \node[vertex] (5) at (8,4) {} edge (4);
        \node[vertex] (6) at (10,4) {} edge (5);
        
        \node[vertex] (8) at (0,2) {} edge (1);
        \node[vertex] (9) at (2,2) {} edge (8) edge (1) edge (2);
        \node[vertex,fill=blue!30] (10) at (4,2) {} edge (9) edge (3) edge (2);
        \node[vertex,draw=orange!120,line width=1pt,fill=blue] (11) at (6,2) {} edge (10) edge[orange,line width=2pt] (4) edge[orange,line width=2pt] (3);
        \node[vertex,fill=blue!30] (12) at (8,2) {} edge (11) edge (5) edge (4);
        \node[vertex] (13) at (10,2) {} edge (12) edge (6) edge (5);
        \node[vertex] (14) at (12,2) {} edge (13) edge (6);
        
        \node[vertex] (15) at (0,0) {} edge (8);
        \node[vertex] (16) at (2,0) {} edge (15) edge (8) edge (9);
        \node[vertex] (17) at (4,0) {} edge (16) edge (10) edge (9);
        \node[vertex,draw=orange!120,line width=1pt,fill=blue!30] (18) at (6,0) {} edge (17) edge[orange,line width=2pt] (11) edge (10);
        \node[vertex,fill=blue!30] (19) at (8,0) {} edge (18) edge (12) edge (11);
        \node[vertex] (20) at (10,0) {} edge (19) edge (13) edge (12);
        \node[vertex] (21) at (12,0) {} edge (20) edge (14) edge (13);
        \node at (-1.5,2) {$\cdots$};
        \node at (13,2) {$\cdots$};
        \node at (14,2) {$=$};
        \begin{scope}[shift={(16.5,0)}]
            \node at (-1.5,2) {$\cdots$};
            \node at (13,2) {$\cdots$};
            
            \node[vertex] (1) at (1,4) {};
            \node[vertex] (2) at (3,4) {} edge (1);
            \node[vertex,draw=orange!120,line width=1pt,fill=blue!30] (3) at (5,4) {} edge (2);
            \node[vertex,draw=orange!120,line width=1pt,fill=blue!30] (4) at (7,4) {} edge[orange,line width=2pt] (3);
            \node[vertex] (5) at (9,4) {} edge (4);
            \node[vertex] (6) at (11,4) {} edge (5);
            
            \node[vertex] (8) at (0,2) {} edge (1);
            \node[vertex] (9) at (2,2) {} edge (8) edge (1) edge (2);
            \node[vertex,fill=blue!30] (10) at (4,2) {} edge (9) edge (3) edge (2);
            \node[vertex,draw=orange!120,line width=1pt,fill=blue] (11) at (6,2) {} edge (10) edge[orange,line width=2pt] (4) edge[orange,line width=2pt] (3);
            \node[vertex,fill=blue!30] (12) at (8,2) {} edge (11) edge (5) edge (4);
            \node[vertex] (13) at (10,2) {} edge (12) edge (6) edge (5);
            \node[vertex] (14) at (12,2) {} edge (13) edge (6);
            
            \node[vertex] (15) at (-1,0) {} edge (8);
            \node[vertex] (16) at (1,0) {} edge (15) edge (8) edge (9);
            \node[vertex] (17) at (3,0) {} edge (16) edge (10) edge (9);
            \node[vertex,draw=orange!120,line width=1pt,fill=blue!30] (18) at (5,0) {} edge (17) edge[orange,line width=2pt] (11) edge (10);
            \node[vertex,fill=blue!30] (19) at (7,0) {} edge (18) edge (12) edge (11);
            \node[vertex] (20) at (9,0) {} edge (19) edge (13) edge (12);
            \node[vertex] (21) at (11,0) {} edge (20) edge (14) edge (13);
        \end{scope}
    \end{tikzpicture}
    \caption{First player move on $\rightminusonly{3}$ when $n$ is even, and its line of symmetry}
    \label{fig:lat_short_even}
    \end{figure}

    Using \cref{thm:symmetry}, Left will play last and win. Thus the game is a first player win.
\end{proof}

Summarizing these results we get the following:
\begin{theorem}\label{thm:2rows}
    When playing \snort on empty graphs of the form $\basegraph{3}$ or its variations $\leftaddoneboth{3}$, $\bothaddone{3}$, $\bothminusone{3}$, $\oneslant{3}$, $\rightaddonly{3}$, and, when $n$ is even, $\rightminusonly{3}$, the outcome class is a first player win.
\end{theorem}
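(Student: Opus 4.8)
The statement is a consolidation of \cref{lem:basegraph}, \cref{lem:leftaddoneboth}, \cref{lem:bothaddone}, \cref{lem:bothminusone}, \cref{lem:oneslant}, \cref{lem:rightaddonly}, and \cref{lem:rightminusonly}, so the plan is to confirm that these seven lemmas jointly cover every graph named in the theorem and then simply invoke them. The one thing worth stating once, rather than seven times, is the organising principle shared by all the lemma proofs: in each family the first player --- taken without loss of generality to be Left, Right's strategy being obtained by swapping colours --- has a distinguished ``central'' vertex whose colouring, combined with the one-hand-tied principle, lets her regard the remaining board as a disjoint union of two components that are isomorphic except for some tinting in \emph{her} colour (blue), after which \cref{thm:symmetry} shows the resulting position is a previous-player win, hence the empty graph is a first-player win.

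Concretely I would proceed in three steps. First, dispose of the small cases ($n\le 2$, or $n\le 3$ for the variants with vertices glued to the left end) by direct inspection or with CGSuite, as each lemma does. Second, for each of the seven families and each parity of $n$, record Left's first move and the set of vertices and edges she mentally deletes; the substantive (but routine) check is that after this deletion the graph is a disjoint union of two pieces that agree up to blue tinting --- one reads the two halves off the accompanying figures, matches vertex counts, matches adjacency including the diagonal edges under the appropriate left--right reflection or $180^\circ$ rotation, and confirms every extra tint is blue. Third, apply \cref{thm:symmetry} to each such position to conclude that the original empty graph is a first-player win, and combine the seven conclusions.

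The only place where care is genuinely needed is this parity-and-asymmetry bookkeeping. For the plain grid $\basegraph{3}$ the two halves are related by reflection when $n$ is odd and by rotation when $n$ is even, and the first move must be chosen so that all leftover tinting lands in Left's colour; for the variants with structure attached to one or both ends ($\oneslant{3}$, $\leftaddoneboth{3}$, $\bothaddone{3}$, $\bothminusone{3}$, $\rightaddonly{3}$, $\rightminusonly{3}$) the split must be arranged so that the glued-on pieces on the left and right are carried to one another, which forces slightly different choices of deleted vertices and edges in the odd and even subcases. Finally, I would flag explicitly that $\rightminusonly{3}$ enters the theorem only for even $n$: for odd $n$ no single central move yields two isomorphic halves, so the copycat argument does not apply, and that case is deliberately excluded (as discussed before \cref{lem:rightminusonly}, a greedy strategy appears to win there but its analysis is not of this uniform type).
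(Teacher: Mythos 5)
Your proposal is correct and matches the paper's treatment: the theorem is stated as a summary of Lemmas \ref{lem:basegraph}--\ref{lem:rightminusonly}, each of which uses exactly the first-move-plus-one-hand-tied split into two blue-tinted isomorphic halves followed by the copycat argument of \cref{thm:symmetry}, with the odd-$n$ case of $\rightminusonly{3}$ excluded just as you flag. No further comment is needed.
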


\section{Future Work}

Finding an alternate strategy for the first player in $\rightminusonly{3}$ when $n$ is odd would be the obvious next step, but this appears to be difficult.

Since we only considered variations in which a full triangle was added to the end of the triangular grid $T_{n,2}$ and $T_{n,3}$, there are many more subgraphs of the infinite triangular grid that can be considered. For example, adding a leaf to either graph, or adding a chain of triangles to $T_{n,3}$, as well as the triangular grid $T_{n,m}$ for larger values of $m$. A particular case of the latter is the diamond $T_{n,n}$, and preliminary work indicates these graphs are also first player wins when playing \snort by using more than one axis of symmetry.

Additionally, we only considered when the graphs are connected -- if there are disconnected components, the outcome class of each connected component alone is not enough to determine the outcome class of the overall game. In combinatorial game theory, a refinement of the outcome class known as the game value is used in this case. We also completed preliminary work computing game values for \snort played on the graphs considered in this paper, but they appear to become too complicated too quickly to be able to describe them succinctly.

\end{document}